\documentclass[a4paper,11pt, DIV=10]{scrartcl}
\setcapindent{0pt}
\usepackage[style=numeric,maxbibnames=99,giveninits,doi=false,isbn=false,sorting=nyt]{biblatex}

\AtBeginBibliography{\small}

\usepackage[T1]{fontenc}
\usepackage{amsmath}
\usepackage{amssymb}
\addtokomafont{disposition}{\rmfamily\normalfont\scshape}
\usepackage{mathtools}
\usepackage[osf,sc]{mathpazo}
\usepackage{relsize}
\usepackage{amsthm}
\usepackage{enumerate}
\usepackage{algorithm}
\usepackage{algpseudocode}
\usepackage[autostyle=true]{csquotes}
\usepackage{verbatim}
\usepackage{graphicx}
\usepackage{placeins}
\usepackage{tikz}
\usepackage{tikz-cd}
\usepackage{microtype}
\usepackage{pdfpages}
\usepackage[british]{babel}
\usepackage[hidelinks,unicode]{hyperref}
\usepackage[commandnameprefix=ifneeded]{changes}
\usepackage{cleveref}
\crefname{algorithm}{Algorithm}{Algorithms}
\recalctypearea

\numberwithin{equation}{section}
\newtheorem{theorem}{Theorem}[section]
\newtheorem*{theorem*}{Theorem}
\newtheorem{proposition}[theorem]{Proposition}
\newtheorem{corollary}[theorem]{Corollary}
\newtheorem{lemma}[theorem]{Lemma}

\theoremstyle{definition}
\newtheorem{definition}[theorem]{Definition}
\newtheorem*{definition*}{Definition}

\theoremstyle{remark}

\newcommand{\Z}{\mathbb{Z}}

\newcommand{\R}{\mathbb{R}}

\newcommand{\mc}[1]{\mathcal{#1}}

\DeclareMathOperator{\diag}{diag}
\DeclareMathOperator{\vol}{vol}

\begin{document}
	\title{Selecting Interpretable Circular Coordinates from Data}
	\author{%
		Vincent P.\ Grande\\
		\textsc{rwth} Aachen University, Germany
		\and
		Marina Meil\u{a}\\
		University of Waterloo, Canada
	}
	\date{}
	\maketitle
	\begin{abstract}
		Circular coordinates obtained from persistent cohomology reveal loop structure in data, but they usually remain abstract: A detected circle does not tell us which measured angle, phase, torsion, or decoder explains it.
		We propose a method for selecting interpretable circle-valued coordinates from a user-supplied dictionary of scientifically meaningful candidates explaining the detected cohomology.
		In the continuous setting, each candidate is represented by the cohomology class of its pulled-back angular form, and selecting a minimum-energy set of candidates spanning the relevant \(H^1\) subspace becomes a minimum-weight basis problem in a vector matroid.
		We then introduce \textsc{circol}, a method for discrete point clouds sampled from the manifold.
		We prove that the introduced cochain inner product is a consistent estimator of the \(L^2\) inner product of fixed smooth \(1\)-forms under non-uniform sampling.
		The resulting projection matrix both helps selecting a basis of low-energy dictionary coordinates and diagnoses topologically trivial candidates or unexplained persistent classes.
		Finally, we verify the effectiveness of our method on synthetic examples, on molecular simulations, and neural recordings of head-direction cells.
	\end{abstract}
	\section{Introduction}
	In many real-world and experimental data sets, the quantity we care
about is cyclic, such as an angular direction, or a phase.  Yet each
individual measurement only indirectly records part of this circular
structure.

In single-cell gene expression data, cell-cycle phase and other
recurrent biological programs are recovered from thousands of gene
expression or multimodal measurements, mixed with
differentiation, noise, and sampling
effects \cite{vandereyken2023methods,cheng2024phlower}.  In molecular
dynamics, conformational change may be organised by rotations around
bonds, even though the recorded state is a large vector of atomic
coordinates \cite{Kovacev-Nikolic:2016}.  In neural recordings from
the brain, each observation is a high-dimensional snapshot of activity
across many neurons, and one subpopulation or circuit may encode an
angular variable such as head direction \cite{taube1990headI,duszkiewicz2024local,duszkiewicz2025dandiHeadDirection}.
In all of these examples, we care not only for checking whether circular structure exists \emph{somewhere} in data, but we want to explain the newly found topological structure in terms of known and interpretable quantities of our data.

Persistent (co-)homology can detect the existence of such loops.
The construction of circular coordinates from persistent
classes in $H^1$ replaces integer cocycles by harmonic representatives, and
integrates them modulo $\Z$ to obtain maps to
$\R/\Z$ \cite{DeSilva2009persistent}.
Later work has made these
coordinates sparser \cite{Perea2020}, more density-robust \cite{paik2023circular}, and better suited to
multiple circular
factors \cite{scoccola2023toroidal}.
The output, however, is still an abstract topological coordinate.  On
a neural population-activity point cloud, for example, this circular coordinate may reveal a
loop without saying whether that loop reflects behavioural head
direction, movement direction, the cyclic order of tuned neurons, or
some unrelated source of variation.  The same ambiguity appears in
other settings: a loop in molecular conformations could reflect one of
several torsion angles, while a loop in motion-capture or robotic
sensor data could reflect gait phase, joint rotation, or a repeated
task cycle.

The task of establishing the interpretation of each loop is performed by the domain scientist, typically by ad-hoc methods, such as visualizing each candidate domain-specific coordinate in turn, see for example \cref{fig:ethanol-candidates}.
Visualization is limited to low dimensional data embeddings and simple topologies, is often subjective and always time-consuming. 

This paper introduces \textsc{circol}, the first method to determine
which domain-specific variable, if any, explains a persistent loop, or
more generally the $1$-dimensional topology detected in the data.  The
scientist provides the list of candidate circular variables, which we
call the \emph{dictionary}, by analogy
with \cite{koelle2022manifold}; \textsc{circol} (Circular
Interpretability via Representative
COhomoLogy; \cref{alg:circular-dictionary-selection}) assigns to each persistent loop a (linear combination of) variable(s) in the dictionary.

The dictionary, i.e.\ the list of
candidate circle-valued functions, can include directly available measurements, like torsion angles in molecular dynamics,
fitted decoders, like in neuronal data, or domain-specific constructions, like cell cycle phase modules.

\paragraph{Contributions and outline}
We formulate dictionary selection for circular coordinates as a cohomological problem: each candidate angle is represented by the class of its pulled-back angular form, and selected candidates must span the chosen $H^1$ subspace.
We equip this problem with a Dirichlet-energy cost and show that the continuous selection problem is a minimum-weight basis problem in a vector matroid, solvable by the greedy algorithm.
In the finite sample point clouds, we design an algorithm, \textsc{circol}, that uses persistent cohomology, integer cocycle lifts, discrete harmonic representatives, and a density-corrected inner product on $1$-cochains.
We show that this inner product on $1$-cochains converges to the continuous inner product on differential forms in the infinite-sample limit.
The resulting projection matrix serves both selection and diagnostics, indicating which candidates explain the selected classes, which are topologically trivial, and which selected classes remain unexplained.

In \Cref{sec:circular-coordinates-manifolds} we discuss the problem in the continuous case.
\cref{sec:discrete-setting} is the core part of the paper discussing the finite-sample case and the convergence of the weighted inner product.
In \cref{sec:experiments} we test \textsc{circol} on synthetic data, on data from molecular simulation and on neural activity data.
Finally, we summarise and discuss limitations and future work in \cref{sec:discussion}, and give the full consistency proof in \cref{sec:circular-convergence}.

\subsection{Related Work}
The method to obtain circular coordinates from persistent cohomology was introduced by de Silva, Morozov, and Vejdemo-Johansson in \cite{DeSilva2009persistent}.
This was then extended to sparse circular coordinates using principal $\Z$-bundles and landmark sets \cite{Perea2020}, spherical coordinates obtained from $H^2$ \cite{schonsheck2024spherical}, and more generally Eilenberg-MacLane coordinates \cite{perea2017multiscaleProjective}.
For time-series data, sliding-window persistent homology provides a related way to quantify periodicity from point clouds built by delay embeddings \cite{perea2015sliding}.
One can view standard approaches to manifold learning like Laplacian eigenmaps \cite{belkin2003laplacian,coifman2006diffusion} as being obtained from integrating gradient eigenvectors of $1$-Hodge Laplacians, directly relating to the circular coordinate viewpoint of integrating harmonic eigenvectors, i.e.\ the $1$-forms associated with the $H^1$ classes.
\textsc{circol} does not introduce a novel circular coordinate construction, but rather determines which supplied circular dictionary elements represent the detected classes.

These coordinate constructions also contain an implicit or explicit variational step: after a cohomology class has been found, one chooses a low-energy or harmonic representative of that class.
In the continuous setting, harmonic maps are critical points of Dirichlet energy, and circle-valued maps associated with integral $1$-cohomology classes are governed by harmonic $1$-forms with integer periods \cite{baird2003harmonic,helein2008harmonic}.
For discrete point clouds, \cite{paik2023circular} adapted these minimisation techniques to non-uniform weights and introduce density-robust circular coordinates.
However, they formulate this in terms of the graph Laplacian, whereas we explicitly formulate the inner product on $1$-cochains and show convergence to differential forms in the infinite-sample limit.

For several independent $H^1$ classes, \cite{scoccola2023toroidal} deal with the problem of constructing an optimal basis and formulate this as a Dirichlet energy minimisation problem.
Their work is close to ours in its use of energy and integral changes of cohomology basis, but its goal is to find the best basis with integer periods of a given cohomology space, whereas our goal is to select and diagnose coordinates from a fixed external dictionary.

Our dictionary viewpoint is closest to ManifoldLasso and
TSLasso \cite{koelle2022manifold,koelle2024consistency}.
ManifoldLasso starts from an abstract manifold-learning embedding,
such as diffusion-map coordinates, and explains its coordinate
functions by sparse regression of their manifold gradients on
gradients of scientist-provided dictionary functions.  TSLasso removes
the embedding step and instead selects dictionary functions whose
gradients span the estimated tangent spaces of the manifold, giving an
interpretable parametrisation with finite-sample recovery guarantees.
Like ManifoldLasso and TSLasso, \textsc{circol} uses
scientist-provided dictionary functions to interpret structure learned
from data.
However, because of the additional algebraic structure of our problem, the solution entails mathematical and algorithmic concepts fundamentally different from those of ManifoldLasso and TSLasso. In particular, while the former algorithms find {\em local bases} of dictionary functions by a {\em standard sparse linear regression algorithm}, \textsc{circol} {\em pulls back} the dictionary functions in a {\em global basis}, where they have {\em integer coefficients}; moreover
the final optimisation is a {\em greedy minimum-weight basis problem in a
vector matroid} rather than a sparse convex regression problem.

There is also a growing literature using cohomological and Hodge-theoretic structure to interpret biological data.
Hodge decompositions have been used to analyse latent flows, RNA velocity and cell differentiation in single-cell data \cite{maehara2019modeling,su2024hodge,cheng2024phlower}.
In neuroscience, cohomological feature extraction has been used to decode head direction from mouse population activity \cite{rybakken2019decoding}, and persistent cohomology has been used to identify toroidal topology in grid-cell population activity \cite{gardner2022toroidal}.
Topological point features can be obtained by projecting selected persistent homology generators to harmonic representatives and pooling simplex weights to incident points, yielding interpretable local signals that indicate how individual points participate in global persistent classes \cite{grande2025point}.
Related work analyses small non-zero Hodge-Laplacian eigenvalues along $\alpha$-filtrations by separating harmonic, gradient, and curl modes \cite{grandeNotAllSmallEigenvalues}.
Maggs et al.\ recently introduced a cohomology-based framework for detecting and separating concurrent cyclic processes in single-cell transcriptomics \cite{Maggs2025TopologyConcurrentCyclicProcesses}.
The latter is particularly close in spirit because it searches biologically meaningful gene-set views for circular cohomological signal.
 Instead of discovering a gene subspace with strong cyclic signal, we audit a separate set of constructed circular candidates to explain the detected class.

Instead of representing forms in simplex-wise cochain bases, spectral exterior calculus and the diffusion-geometry framework build finite spectral or operator-theoretic representations from a small number of Laplacian eigenfunctions or sampled diffusion operators to approximate Hodge-type operators and differential forms \cite{berry2020spectral,jones2024diffusion,jones2024manifold}.
Compared with simplicial cochain methods, these approaches trade sparse local simplex-incidence structure, an exact finite cochain complex, exact harmonic representatives of finite cohomology classes, and integral cocycle data for compact spectral approximations with asymptotic convergence guarantees.

	\section{Circular coordinates on manifolds}
	\label{sec:circular-coordinates-manifolds}
	We will first develop a notion of continuous circular valued coordinates on manifolds using differential forms.
	We consider a smooth compact manifold $\mc{M}$ with Riemannian metric $g$.
	\paragraph{Singular cohomology}
	The first cohomology group $H^1(\mc{M};\Z)$ with $\Z$-coefficients describes the $1$-dimensional holes and loops of $\mc{M}$.
	Because $0$-dimensional homology is always free, the universal coefficient theorem states that $H^1(\mc{M};\Z)$ is torsion-free.
	Thus, the inclusion of coefficients $\iota\colon \Z\hookrightarrow \R$ induces an injection $\iota^*\colon H^1(\mc{M};\Z)\hookrightarrow H^1(\mc{M};\R)$.
	Furthermore, $\iota$ sends a basis of $H^1(\mc{M};\Z)$ to a basis of $H^1(\mc{M};\R)$.
	We can identify $\iota^*(H^1(\mc{M};\Z))$ with an integer lattice in $H^1(\mc{M};\R)$.

	\paragraph{Differential forms and de-Rham cohomology}
	The reason why we are interested in cohomology in real coefficients is the de-Rham theorem.
	The de-Rham theorem gives an explicit isomorphism between the de-Rham cohomology group $H^1_{dR}(\mc{M})$ on differential forms and $H^1(\mc{M};\R)$.
	Given a closed form $\alpha$ representing a class in $H^1_{dR}(\mc{M})$, we can evaluate it on a simplex $\sigma\colon \Delta^1\to \mc{M}$ by integrating $\alpha$ over the image of $\sigma$.
	This generates a map $k\colon \Omega^1(\mc{M})\to \hom_\R (C_1(\mc{M};\R),\R)$,
	which in turn induces an isomorphism in cohomology $[k]\colon H^1_{dR}(\mc{M})\to H^1(\mc{M};\R)$.

	While this is an isomorphism in cohomology, there is no inverse on the cochain level:
	Some cochains do not correspond to differential forms.
	The intuition for this fact is that while differential forms are continuous objects, cochains can be arbitrary locally complicated functions on simplices.

	Again, the integer cohomology classes now represent an integer lattice in the de-Rham cohomology $H^1_{dR}(\mc{M})$, corresponding to cohomology generated by differential $1$-forms, which are called differential forms with integer periods.
	
	We consider a cycle $c\colon S^1\to \mc{M}$, representing a homology class $[c]\in H_1(\mc{M};\Z)$.
	We can now integrate a differential form $\alpha$ over $c$ and equate this to the evaluation of the cochain $\bar{\alpha}$ on a representative $\bar{c}$ of the homology class $[c]$:
		\begin{equation}
		\int_c \alpha = \bar{\alpha}(\bar{c})\in \R.
		\end{equation}
	If $[\alpha]$ is an integer cohomology class, this integral is in $\Z$.
	By Stokes' theorem, this integral only depends on the homology class $[c]$.

	\paragraph{Continuous circular coordinates}
	Given a differential $1$-form $\alpha\in \Omega^1(\mc{M})$ and a point $x_0\in \mc{M}$, we can send every path $\gamma\colon [0,1]\to \mc{M}$ with $\gamma(0)=x_0$ to the integral
		\begin{equation}
		\int_{[0,1]} \gamma^*\alpha \mod \Z \in \R/\Z.
		\end{equation}
	If $\alpha$ represents an integer cohomology class, this is independent of the choice of the path $\gamma$ between $x_0$ and $\gamma(1)$, because cycles will evaluate to integers.
	Thus, we obtain a well-defined map $\theta\colon \mc{M}\to \R/\Z$.
	We call such a map $\theta$ a \emph{circular coordinate} on $\mc{M}$.
	
	In summary, given a smooth compact manifold $\mc{M}$, we can compute its first integer cohomology group $H^1(\mc{M};\Z)$.
	Choosing a basis of this group, we can find a corresponding basis of differential $1$-forms with integer periods.
	These forms then induce a system of  circular coordinates $\theta_i\colon \mc{M}\to \R/\Z$ on $\mc{M}$ that parametrise the non-trivial topology of $\mc{M}$.

	\paragraph{Coordinates with physical meaning}

	Now assume that we have a dictionary $\mathcal{G}=\{g_j\colon \mc{M}\to \R/\Z, j=1,\dots, J\}$ of candidate circular coordinates on $\mc{M}$.
	These coordinates could come from physical measurements, e.g.\ angles of joints in a robotic system, or phase angles in a dynamical system.
	We want to select a small subset of these coordinates that \enquote{explain} the topology of $\mc{M}$.
	
	But what does it mean to \enquote{explain} the topology of $\mc{M}$?
	We can reverse the construction above:
	Given a circular coordinate $g_j\colon \mc{M}\to \R/\Z$, we can associate to it a differential $1$-form $\omega (g_j)\in \Omega^1(\mc{M})$ by pulling back the standard angular form $d\theta$ on $\R/\Z$:
		\begin{equation}
		\omega (g_j) \coloneq g_j^* d\theta.
		\end{equation}
	This form is closed, i.e.\ $d\omega (g_j)=0$ because the exterior derivative commutes with pullbacks and $dd\theta=0$.
	Thus, $\omega (g_j)$ represents a cohomology class $[\omega (g_j)]\in H^1_{dR}(\mc{M})$ in the real de-Rham cohomology of $\mc{M}$.
	Hence, for a subset of circular coordinates to \enquote{explain} the topology of $\mc{M}$, we mean that the set of cohomology classes $[\omega (g_j)]$ for $j$ in this subset forms a basis of $H^1_{dR}(\mc{M})$.

	\paragraph{The Dirichlet energy of this basis}
	
	We now want to choose among all subsets of circular coordinates that explain the topology of $\mc{M}$ the ones that are in some sense the \enquote{smoothest}.
	This means that they should explain the topology of $\mc{M}$ with as little \enquote{variation} as possible over the entire manifold $\mc{M}$.
	Given a Riemannian metric $g$ on $\mc{M}$, we can consider the $L_2$-norm of differential forms induced by $g$.
	The existence of a well-defined inner product is another reason to work with deRham cohomology on differential forms instead of with simplices.
	Using the Hodge star operator $\star$, we can write this norm as
		\begin{equation}
		\lVert \alpha \rVert^2 = \int_{\mc{M}} \alpha \wedge \star \alpha.
		\end{equation}
	Equivalently,
	we can write this norm using the pointwise inner product on forms induced by the metric $g$:
		\begin{equation}
		\lVert \alpha \rVert^2 = \int_{\mc{M}} \langle \alpha, \alpha \rangle_g \, \vol_{\mc{M}},
		\end{equation}
	where $\vol_{\mc{M}}$ is the volume form on $\mc{M}$ induced by $g$.
	Given a circular coordinate $g_j$, we can compute the norm of the associated differential form $\omega (g_j)$ using this formula.
	This is the same as the Dirichlet energy of the map $g_j\colon \mc{M}\to \R/\Z$, or the $L_2$-norm of its gradient.
	Using this norm, we can define the Dirichlet energy of a set of circular coordinates $\{g_{j}\}_{j\in \mc{S}}$ as the sum of the squared norms of the associated differential forms:
		\begin{equation}
		E(\{g_{j}\}_{j\in \mc{S}}) \coloneq \sum_{j\in \mc{S}} \lVert \omega (g_j)\rVert^2.
		\end{equation}
	\paragraph{Continuous problem formulation}
	Given a smooth compact connected Riemannian manifold $\mc{M}$ with metric $g$ and a dictionary $\mathcal{G}=\{g_j\colon \mc{M}\to \R/\Z, j=1,\dots, J\}$ of candidate circular coordinates on $\mc{M}$, we want to find a subset $\mc{S}\subset [1,\dots, J]$ of circular coordinates such that
	\begin{enumerate}
		\item The set of cohomology classes $\{[\omega (g_j)]\}_{j\in \mc{S}}$ forms a basis of real-valued de-Rham cohomology $H^1_{dR}(\mc{M})$.
		\item The Dirichlet energy $E(\{g_{j}\}_{j\in \mc{S}})$ is minimal among all subsets satisfying (1).
	\end{enumerate}
\paragraph{Formulation as a matroid problem}
In this paragraph, we reformulate the above problem in terms of linear algebra.
Let $\{\alpha_1,\dots, \alpha_k\}$ be harmonic representatives of a basis of the de-Rham cohomology $H^1_{dR}(\mc{M})$ of $\mc{M}$.
These representatives need not be orthonormal with respect to the inner product induced by the Riemannian metric $g$.
We can now consider the Gram matrix $Q\in \R^{k\times k}$ defined as
\begin{equation}
Q_{ij} \coloneq \langle \alpha_i, \alpha_j \rangle
\end{equation}
where $\langle \cdot, \cdot \rangle$ is the inner product on differential forms induced by $g$.
Given a circular coordinate $g_j$, we can compute the projection of the associated differential form $\omega (g_j)$ onto the basis $\{\alpha_1,\dots, \alpha_k\}$ as
\begin{equation}
P(g_j) \coloneq Q^{-1} \begin{pmatrix}
\langle \alpha_1, \omega (g_j) \rangle \\
\vdots \\
\langle \alpha_k, \omega (g_j) \rangle
\end{pmatrix} \in \R^k.
\end{equation}
The same coefficients determine the harmonic representative $h_j$ of the cohomology class $[\omega(g_j)]$:
\begin{equation}
h_j = \sum_{i=1}^k P(g_j)_i \alpha_i.
\end{equation}
Therefore
\begin{equation}
P(g_j)^\top Q P(g_j)=\lVert h_j\rVert^2.
\end{equation}
If $\omega(g_j)=h_j+df_j$ is the Hodge decomposition of $\omega(g_j)$, then $\lVert \omega(g_j)\rVert^2=\lVert h_j\rVert^2+\lVert df_j\rVert^2$.
Using this notation, we can reformulate the problem above as follows:
Given vectors $v_j = P(g_j) \in \R^k$ with costs $c_j = \lVert \omega (g_j) \rVert^2$, pick a subset $\mc{S}\subset [1,\dots, J]$ minimising $\sum_{j\in \mc{S}} c_j$ such that the set of vectors $\{v_j\}_{j\in \mc{S}}$ forms a basis of $\R^k$ , i.e.\ such that the matrix $[v_j]_{j\in \mc{S}}$ has non-zero determinant.
This is a classical problem in matroid theory, called the \emph{minimum weight basis problem}, and can be solved using greedy algorithms in polynomial time.
\section{Discrete setting}
\label{sec:discrete-setting}
In practice, we do not have access to the full manifold $\mc{M}$, but only to a finite set of points $X=\{x_1,\dots, x_N\}\subset \mc{M}\subset \R^D$ sampled from $\mc{M}$ in $D$-dimensional ambient space.
The discrete version of \textsc{circol} therefore replaces the continuous objects from the previous section by cochains, persistent classes, and empirical inner products on a simplicial complex.
This means we are faced with new challenges:
\begin{enumerate}
	\item We need to estimate the cohomology $H^1_{dR}(\mc{M})$ from the point cloud $X$.
	\item We need to estimate the inner product on cochains as a discretization of differential forms on $\mc{M}$ from the point cloud $X$.
	\item We need to turn the sampled circular coordinates $g_j\colon X\to \R/\Z$ into real $1$-cocycles on $\mc{S}_\varepsilon$, resolving the modulo-$\Z$ ambiguity on edges so that their cohomology classes can be compared with the selected persistent classes.
\end{enumerate}
\paragraph{Persistent cohomology}
To estimate the cohomology of $\mc{M}$ from the point cloud $X$, we can use persistent cohomology:
First, we construct a filtered simplicial complex $\mc{S}_\varepsilon$ from $X$, e.g.\ a Vietoris--Rips filtration or an $\alpha$-filtration in low ambient dimension.

We then compute the persistent cohomology of this filtration with $\Z/pZ$-coefficients for a small odd prime $p$ in dimension $1$.
Using a combination of domain knowledge and heuristics, we can then select a set of representative cohomology classes $\alpha_k$ and a scale parameter $\varepsilon$ at which we consider the simplicial complex $\mc{S}_\varepsilon$ to be a good approximation of the manifold $\mc{M}$.

In theory, we do not need to pick a single scale $\varepsilon$, but can consider a different scale for each cohomology class.
However, we will now assume that there exists a single scale $\varepsilon$ at which all selected cohomology classes are \enquote{alive}.

Finally, following the lifting step in the original circular-coordinate construction \cite[Section~2.4]{DeSilva2009persistent}, we attempt to lift the selected cohomology classes from $\Z/pZ$-coefficients to integer coefficients $\Z$.
We take centered integer representatives of the selected $\Z/pZ$-cocycles and use the Bockstein obstruction to test whether their triangle defects can be removed by an integer cochain correction.
When this obstruction vanishes, the correction yields integer cohomology classes $[\beta_1]_\Z,\dots, [\beta_k]_\Z$ in $H^1(\mc{S}_\varepsilon;\Z)$.
\paragraph{Estimating the inner product on cochains}
The next step would now be to relate the candidate circular coordinates to the above cohomology classes.
For these projections, we however first need to understand how to compute the inner product on 1-cochains in the discrete setting, which we will derive in this paragraph
The argument to get from continuous $L_2$ inner product to our discrete cochain inner product has four steps:
\begin{enumerate}
\item rewrite the pointwise inner product as an average over directions
\item replace those infinitesimal directional evaluations by short geodesic integrals
\item integrate over the manifold and correct for non-uniform sampling
\item discretize the resulting pairwise formula on the edges of the simplicial complex.
\end{enumerate}

Let $E_1$ be a choice of orientation of the edges of $\mc{S}_\varepsilon$.
Let $\omega_{d-1}$ be the volume of the Euclidean unit sphere in dimension $d$.
Let $\mu$ denote Riemannian volume on a smooth compact $d$-dimensional Riemannian manifold $\mc{M}$ without boundary, take a bandwidth $h>0$, and define
\begin{equation}
K_h(x,y)=h^{-d}\kappa\!\left(\frac{d_{\mc{M}}(x,y)}{h}\right),
\end{equation}
where $\kappa\colon [0,\infty)\to[0,\infty)$ is bounded, supported in $[0,1)$, not identically zero, and has moment constants
\begin{equation}
m_0=\omega_{d-1}\int_0^\infty \kappa(\rho)\rho^{d-1}\,d\rho,
\qquad
m_2=\omega_{d-1}\int_0^\infty \kappa(\rho)\rho^{d+1}\,d\rho.
\end{equation}
Since $\kappa$ is non-negative and non-zero on a set of positive measure, $m_2>0$, so the normalization below is well-defined.
For the unweighted Vietoris--Rips neighbourhood kernel $\kappa_{\mathrm{VR}}(\rho)=\mathbf 1_{\rho<1}$, these constants can be computed as:
\begin{equation}
m_0=\frac{\omega_{d-1}}{d},
\qquad
m_2=\frac{\omega_{d-1}}{d+2},
\qquad
\frac{2dm_0^2}{m_2h^2}=\frac{2\omega_{d-1}(d+2)}{dh^2}.
\end{equation}
For data points $x_1,\dots,x_N$, define the empirical kernel masses
\begin{equation}
q_i=\sum_{\ell\neq i}K_h(x_i,x_\ell).
\end{equation}
For an oriented edge $e=(x_i,x_j)\in E_1$, set
\begin{equation}
w_e=
\begin{cases}
\dfrac{2dm_0^2}{m_2h^2}\dfrac{K_h(x_i,x_j)}{q_iq_j}, & \text{if } K_h(x_i,x_j)>0,\\[0.8ex]
0, & \text{if } K_h(x_i,x_j)=0,
\end{cases}
\end{equation}
and define, for $1$-cochains $c,c'\in C^1(\mc{S}_\varepsilon;\R)$,
\begin{equation}
\langle c,c'\rangle_M
=
\sum_{e\in E_1} w_e\,c(e)c'(e),
\qquad
M=\diag(w_e)_{e\in E_1}.
\end{equation}
The factors $q_iq_j$ remove the leading effect of non-uniform sampling density.
The factor $h^{-2}$ appears because a $1$-cochain stores an edge integral.
For a short edge of length $r$, such an integral is of order $r$, so the product of two cochain values already contains the required factor $r^2$ and we do not divide by edge lengths.
The normalization is chosen to estimate the $L_2$ inner product of differential $1$-forms from edge integrals.
The factor $2$ compensates for using one chosen orientation of each undirected edge, whereas the continuum double integral counts both orientations.

We will now reproduce the consistency theorem, for which we will give the proof in \cref{sec:circular-convergence}:
\begin{theorem}[Consistency of the cochain inner product]
\label{thm:cochain-inner-product-consistency-main-text}
Assume that $x_1,\dots,x_N$ are sampled i.i.d.\ from $\nu=\pi\mu$ with $\pi>0$ smooth, that $h=h_N\to 0$ and $Nh_N^d/\log N\to\infty$, and that the $1$-skeleton contains all pairs with $K_{h_N}(x_i,x_j)>0$.
For fixed smooth $1$-forms $\alpha,\beta$, let $c_\alpha(e)$ and $c_\beta(e)$ be their geodesic integrals along the oriented edge $e$.
Then
\begin{equation}
\label{eq:discrete-inner-product-consistency-main-text}
\langle c_\alpha,c_\beta\rangle_M
=
\langle \alpha,\beta\rangle_{L_2(\mu)}
+ O(h_N^2)
+ O_{\mathbb P}\!\left(\sqrt{\frac{\log N}{N h_N^d}}\right),
\end{equation}
and in particular $\langle c_\alpha,c_\beta\rangle_M\to \langle \alpha,\beta\rangle_{L_2(\mu)}$ in probability.
\end{theorem}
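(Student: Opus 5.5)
We split the error into a deterministic bias term and a stochastic term, following the four-step derivation sketched before the statement. Write $\langle c_\alpha,c_\beta\rangle_M=\tfrac12\sum_{i\neq j} w_{ij}\,c_\alpha(x_i,x_j)c_\beta(x_i,x_j)$. The factor $\tfrac12$ together with the $2$ in $w_e$ accounts for orientations, and the summand is symmetric because both cochains flip sign under reversal. We then replace the random normalisers $q_i$ by their deterministic counterparts $\bar q(x)=(N-1)\int K_h(x,y)\pi(y)\,d\mu(y)$. A standard expansion in normal coordinates gives $\bar q(x)=(N-1)\bigl(m_0\pi(x)+O(h^2)\bigr)$. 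The key input is uniform concentration: by Bernstein's inequality for each $i$ and a union bound over the $N$ points, $\max_i|q_i/\bar q(x_i)-1|=O_{\mathbb P}(\sqrt{\log N/(Nh^d)})$. This is where the condition $Nh^d/\log N\to\infty$ enters. Hence $1/(q_iq_j)=\bigl(1+O_{\mathbb P}(\sqrt{\log N/(Nh^d)})\bigr)/(\bar q(x_i)\bar q(x_j))$ uniformly, and it suffices to analyse the U-statistic with deterministic weights, since the remaining sum is $O_{\mathbb P}(1)$.

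\textbf{Bias.} For the resulting second-order U-statistic
\[
U_N=\frac{1}{N(N-1)}\sum_{i\neq j}\frac{2d\,m_0^2}{m_2h^2}\,\frac{K_h(x_i,x_j)\,c_\alpha(x_i,x_j)c_\beta(x_i,x_j)}{\bar\pi_h(x_i)\bar\pi_h(x_j)}
\]
(with $\bar q=(N-1)\bar\pi_h$ and the $N$ vs.\ $N-1$ discrepancy being $O(1/N)$), we compute the expectation. Use exponential coordinates $y=\exp_x(hv)$ with $|v|<1$. The geodesic integral satisfies $c_\alpha(x,y)=h\,\alpha_x(v)+O(h^2)$; the $O(h^2)$ term is odd in $v$ at leading order, so it contributes at relative order $h^2$ after symmetrisation. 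The density ratio gives $\pi(y)/\pi_h(y)\approx 1/(m_0\pi)$, and the product yields $\int \pi(x)\pi(y)/(m_0^2\pi(x)\pi(y))$, which cancels the density. This leaves $\frac{2d}{m_2}\cdot\frac12\int_{\mc M}\int_{|v|<1}\kappa(|v|)\,\alpha_x(v)\beta_x(v)\,dv\,d\mu(x)$. The isotropy identity $\int\kappa(|v|)v_av_b\,dv=\frac{m_2}{d}\delta_{ab}$ turns this into $\int\langle\alpha,\beta\rangle_g\,d\mu$. All corrections coming from metric curvature, the volume Jacobian, the density Taylor expansion, and the form expansion appear at order $h$; the odd ones vanish by symmetry of $v\mapsto -v$, which leaves $O(h^2)$. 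This is the step I expect to be most delicate: tracking that every first-order term is genuinely odd, including the second-order term in the geodesic integral and the gradient of $1/\pi_h$, so that the bias is $O(h^2)$ rather than $O(h)$.

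\textbf{Variance.} Use the Hoeffding decomposition of $U_N$. The kernel $H(x,y)$ has size $O(h^{-2}\cdot h^{-d}\cdot h^2)=O(h^{-d})$ and is supported on $d(x,y)<h$. Its linear projection $\mathbb E[H(x,y)\mid x]$ is $O(1)$ and bounded, so it contributes $O_{\mathbb P}(N^{-1/2})$. The degenerate part has variance $O(\mathbb E H^2/N^2)=O(1/(N^2h^d))$, i.e.\ error $O_{\mathbb P}(1/(Nh^{d/2}))$. Both are dominated by $\sqrt{\log N/(Nh^d)}$. Combining these bounds with the normaliser-replacement error gives the stated rate, and convergence in probability follows since $h_N\to0$ and $Nh_N^d/\log N\to\infty$.
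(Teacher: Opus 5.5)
Your proposal is correct and is essentially the paper's proof. Both use Bernstein plus a union bound to replace $q_i$ by $(N-1)q_h(x_i)$ uniformly, a normal-coordinate Taylor expansion whose cubic terms are odd and vanish against the radial kernel (giving the $O(h^2)$ bias), and a Hoeffding decomposition of the resulting U-statistic (giving $O_{\mathbb P}(N^{-1/2})+O_{\mathbb P}((Nh^{d/2})^{-1})$).

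There are only minor slips to fix:
\begin{itemize}
\item Your $U_N$ drops the $\tfrac12$ from your own orientation bookkeeping, so as written its mean tends to $2\langle\alpha,\beta\rangle_{L_2(\mu)}$; you then halve this without comment.
\item The $h^2$ term of the geodesic integral is a quadratic form, hence even in $v$. Only its product with the linear term of the other cochain is odd.
\item The density-gradient term you worry about never arises if you use, as the paper does, that $m_0\pi/q_h=1+O(h^2)$ uniformly.
\end{itemize}
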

We will briefly give an intuition for the proof below:
First, the sphere-average identity rewrites $\langle \alpha_x,\beta_x\rangle_g$ as an average of $\alpha_x(\theta)\beta_x(\theta)$ over unit tangent directions.
In geodesic normal coordinates, we can do Taylor expansion to the edge integrals $\alpha(x,y)$ and $\beta(x,y)$ and observe that the odd cubic terms vanish against the radial kernel, leaving an $O(h^2)$ bias after the $h^{-2}$ normalization.
The factors $q_iq_j$ are discretizations of the continuum density correction $q_h(x)q_h(y)$, and we can control the stochastic part by concentration estimates for the empirical kernel masses and by a variance bound for the resulting pairwise average.

\paragraph{Computing discrete harmonic representatives}
Combining the selected integer cocycles with the cochain inner product, we compute real-valued harmonic representatives of the classes $[\beta_i]_\R$.
Concretely, for each $i$ we take the minimum-norm representative in the real cohomology class,
\begin{equation}
\alpha_i
\in
\arg\min_{\alpha\in \beta_i+\operatorname{im} B_1^\top}
\lVert \alpha\rVert_M^2.
\end{equation}
These cochains represent the selected persistent classes and approximate harmonic $1$-forms on $\mc{M}$.
We write $\alpha_1,\dots,\alpha_k\in C^1(\mc{S}_\varepsilon;\R)$ for these representatives and write $A$ for the matrix with these representatives as columns.

\paragraph{From candidate circular functions to $1$-cochains and harmonic projections}
We again consider a circular coordinate $g_j\colon \mc{M}\to \R/\Z$ from the dictionary $\mathcal{G}$.
This circular coordinate on the entirety of $\mc{M}$ introduces a coordinate function on the point cloud $X\to \R/\Z$ by restriction.
For every oriented edge $e=(x,y)$ in the simplicial complex $\mc{S}_\varepsilon$, we compute the short angular difference
\begin{equation}
\omega(g_j)(e)
\in (-1/2,1/2]
\end{equation}
as the unique real number satisfying
\begin{equation}
\omega(g_j)(e)
\equiv
g_j(y)-g_j(x)
\pmod{\Z}.
\end{equation}
This gives a real $1$-cochain $\omega(g_j)\in C^1(\mc{S}_\varepsilon;\R)$ that approximates the edge integral of $g_j^*d\theta$.
We denote by $G$ the matrix whose $j$-th column is the cochain $\omega(g_j)$ in the standard basis of $C^1(\mc{S}_\varepsilon;\R)$.

Let us denote the first and second boundary matrices of $\mc{S}_\varepsilon$ by $B_1$ and $B_2$, respectively.
With a consistent unwrapping on every $2$-simplex, the real triangle defects vanish and
\begin{equation}
d^1\omega(g_j)=B_2^\top \omega(g_j)=0.
\end{equation}
Under this condition, $\omega(g_j)$ is a cocycle and represents a possibly trivial cohomology class in $H^1(\mc{S}_\varepsilon;\R)$.
Non-zero integer triangle defects indicate that the sampled vertex values do not define a real cocycle on the chosen complex without further unwrapping or correction.

Using the inner product on cochains, we can associate a \enquote{cost} $c_j$ to the circular coordinate $g_j$ by computing the norm of $\omega (g_j)$.
Furthermore, we can relate $\omega (g_j)$ to the topology of $\mc{M}$ by projecting the $\omega (g_j)$ onto the subset spanned by the discrete harmonic representatives $A$ and express this projection in this basis.
However, we need to be careful, because the representatives $\alpha_i$ are not orthonormal with respect to the inner product on cochains.
We can compute the Gram matrix $Q$ of the representatives $\alpha_i$ as
\begin{equation}
Q_{ij} = \langle \alpha_i, \alpha_j \rangle_M = \alpha_i^\top M \alpha_j.
\end{equation}
Using this Gram matrix, we can now compute the projection of each $\omega(g_j)$ onto the space spanned by the discrete harmonic representatives as
\begin{equation}
p_j = Q^{-1}A^\top M \omega(g_j).
\end{equation}
\begin{equation}
P=Q^{-1} A^\top M G \in \R^{k\times J}.
\end{equation}
$P$ is now the matrix of the projections of the $\omega (g_j)$ onto the space spanned by the discrete harmonic representatives in the basis given by these representatives, and in particular a matrix of dimension $k\times J$, where $k$ is the number of selected cohomology classes and $J$ is the number of circular coordinates in the dictionary $\mathcal{G}$.

\paragraph{Dealing with \enquote{noisy} homology classes}
In geometric machine learning and topological data analysis, a huge emphasis is put on the question of selecting the \enquote{right} scale to reconstruct a manifold from a point cloud.
In practice, there is often no single scale at which the homology of the simplicial complex $\mc{S}_\varepsilon$ perfectly matches the homology of the underlying manifold $\mc{M}$.
Even when selecting homology classes using persistent homology across scales, there is usually not a heuristic that can perfectly separate the signal from the \enquote{noise}, that could for example arise from less sampled regions of the manifold.

In this work, we argue that we can circumvent this problem for our purposes by using the dictionary of circular coordinates $\mathcal{G}$ as a second criterion of \enquote{truthful reconstruction}:
The following theorem states that any \enquote{noisy} homology class that does not correspond to a true homology class of the underlying manifold $\mc{M}$ will not be \enquote{explained} by any valid circular coordinate in the dictionary $\mathcal{G}$, and thus can be excluded before the optimisation problem is solved.
\begin{theorem}
Let $(N,\mathrm g)$ be a closed oriented Riemannian manifold, $i\colon N\to M$ a smooth map, and
$h\in H_1(N;\mathbb R)$.  
Let $\alpha_h\in\mathcal H^1(N)$ be the harmonic $1$-form associated to $h$. 
For a smooth map $g\colon M\to S^1$ (the candidate circular coordinate), we define the differential form
\begin{equation}
\beta\coloneq(i^*g)^*(d\theta)=i^* \bigl(g^*(d\theta)\bigr)\in\Omega^1(N),
\end{equation}
where $d\theta$ is the standard closed $1$-form on $S^1=\mathbb R/\mathbb Z$.
If $i_*h=0$ in $H_1(M;\mathbb R)$, then
\begin{equation}
\langle \alpha_h,\beta\rangle_{L^2}=0.
\end{equation}
\end{theorem}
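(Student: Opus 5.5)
The key point is that $\beta$ is closed with integer periods, and its $L^2$ pairing with a harmonic form depends only on its de Rham class. We must also fix what ``the harmonic form associated to $h$'' means. Since $h$ lives in homology, we take $\alpha_h$ to be the harmonic Poincaré dual of $h$. By Hodge theory together with Poincaré duality on the closed oriented manifold $N$, $\alpha_h$ is the unique harmonic $1$-form satisfying
\begin{equation}
\langle \alpha_h,\gamma\rangle_{L^2}=\int_N \gamma\wedge\star\alpha_h=\int_h\gamma
\end{equation}
for every closed $1$-form $\gamma$ on $N$. Here $\star\alpha_h$ is a closed $(n-1)$-form whose class is Poincaré dual to $h$, and the right-hand side means pairing $[\gamma]\in H^1_{dR}(N)\cong H^1(N;\R)$ with $h\in H_1(N;\R)$ via the de Rham isomorphism described in \cref{sec:circular-coordinates-manifolds}.

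\textbf{Step 1: reduce to a cohomological pairing.} We first check that the pairing $\gamma\mapsto\langle\alpha_h,\gamma\rangle_{L^2}$ only sees the de Rham class of $\gamma$. If $\gamma=df$, then
\begin{equation}
\langle\alpha_h,df\rangle=\langle d^*\alpha_h,f\rangle=0,
\end{equation}
since harmonic forms on a closed manifold are coclosed. Hence $\langle\alpha_h,\cdot\rangle$ descends to a linear functional on $H^1_{dR}(N)$. By the defining property of $\alpha_h$, this functional equals evaluation on $h$. This gives the identity
\begin{equation}
\langle\alpha_h,\beta\rangle_{L^2}=\langle[\beta],h\rangle.
\end{equation}

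\textbf{Step 2: use naturality.} The form $\beta=i^*(g^*d\theta)$ is closed, as shown earlier in \cref{sec:circular-coordinates-manifolds}. Its class is $[\beta]=i^*[g^*d\theta]$. Naturality of the Kronecker pairing under the de Rham isomorphism, which commutes with pullbacks, gives
\begin{equation}
\langle i^*[g^*d\theta],h\rangle=\langle[g^*d\theta],i_*h\rangle.
\end{equation}
Concretely, for a smooth cycle $c$ representing $h$, we have $\int_c i^*\omega=\int_{i\circ c}\omega$. Since $i_*h=0$ by hypothesis, the right-hand side vanishes, so $\langle\alpha_h,\beta\rangle_{L^2}=0$.

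\textbf{Main obstacle.} The only delicate point is justifying the characterisation of $\alpha_h$. This needs the Hodge decomposition on $N$, the fact that the Hodge star maps harmonic $1$-forms to harmonic $(n-1)$-forms, and Poincaré duality, so that $\star\alpha_h$ represents the dual of $h$. Alternatively, one can simply define $\alpha_h$ as the Riesz representative of the functional $[\gamma]\mapsto\langle[\gamma],h\rangle$ on the finite-dimensional space $\mathcal H^1(N)\cong H^1_{dR}(N)$ with the $L^2$ inner product. Step 1 then shows that this representative pairs correctly with all closed forms, not just harmonic ones. I would adopt this second formulation, because it avoids invoking Poincaré duality explicitly; orientability is then needed only to define $\star$ and the $L^2$ inner product.
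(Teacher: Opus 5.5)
Your proof is correct and follows essentially the same route as the paper. In both, coclosedness of $\alpha_h$ kills the exact part of $\beta$; the paper does this through an explicit Hodge decomposition $\beta=\omega+df$, you through descent of the pairing to $H^1_{dR}(N)$. The defining property of $\alpha_h$ then turns $\langle\alpha_h,\beta\rangle$ into $\int_h\beta$, and naturality together with $i_*h=0$ finishes. Your explicit Riesz-representative characterisation of $\alpha_h$ states precisely the property the paper invokes only as \enquote{the definition of associated harmonic class}.
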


\begin{proof}
Since $\beta$ is closed, Hodge decomposition on $1$-forms gives
\begin{equation}
\beta=\omega+df,\qquad \omega\in\mathcal H^1(N),\ f\in C^\infty(N).
\end{equation}
Hence
\begin{equation}
\langle \alpha_h,\beta\rangle
=\langle \alpha_h,\omega\rangle+\langle \alpha_h,df\rangle.
\end{equation}
Because $\alpha_h$ is harmonic, $\delta\alpha_h=0$, so on closed $N$,
\begin{equation}
\langle \alpha_h,df\rangle=\langle \delta\alpha_h,f\rangle=0.
\end{equation}
Therefore
\begin{equation}
\langle \alpha_h,\beta\rangle=\langle \alpha_h,\omega\rangle=\int_h \omega=\int_h \beta,
\end{equation}
by the definition of associated harmonic class and since $\beta-\omega=df$ is exact.

Now set $\eta:=g^*(d\theta)\in\Omega^1(M)$. Then $\beta=i^*\eta$, so by naturality of pairing,
\begin{equation}
\int_h \beta=\int_h i^*\eta=\int_{i_*h}\eta.
\end{equation}
If $i_*h=0\in H_1(M;\mathbb R)$, pairing with any closed $1$-form vanishes; in particular
\begin{equation}
\int_{i_*h}\eta=0.
\end{equation}
Thus $\langle \alpha_h,\beta\rangle_{L^2}=0$.
\end{proof}
This shows that for every homology class $h$ that does not correspond to a true homology class of the underlying manifold $\mc{M}$, we have $\langle \alpha_h,\beta\rangle_{L^2}=0$ for all circular coordinates $g\in \mathcal{G}$.
In other words, the corresponding row of $A^\top MG$ will be zero, and we can directly exclude the homology class $h$ from the optimisation problem.
It could of course still happen that some homology classes that do correspond to true homology classes of $\mc{M}$ are not \enquote{explained} by any circular coordinate in the dictionary $\mathcal{G}$, but as we focus on the problem of best explanation of the topology of $\mc{M}$ by the given coordinates in the dictionary $\mathcal{G}$, this is not a problem for our purposes.
\paragraph{Discrete problem definition}
Given a point cloud $X=\{x_1,\dots, x_N\}\subset \R^D$ sampled from a smooth compact connected Riemannian manifold $\mc{M}$, a filtered simplicial complex $\mc{S}_\varepsilon$ built on top of $X$, a set of integer cohomology classes $[\beta_1]_\Z,\dots, [\beta_k]_\Z$ in $H^1(\mc{S}_\varepsilon;\Z)$, an inner product on cochains governed by matrix $M$, and a dictionary $\mathcal{G}=\{g_j\colon X\to \R/\Z, j=1,\dots, J\}$ of candidate circular coordinates on $X$, the finite-sample \textsc{circol} problem is to find a subset $\mc{S}\subset [1,\dots, J]$ of circular coordinates such that
\begin{enumerate}
	\item The set of projections $\{P(g_j)\}_{j\in \mc{S}}$ forms a basis of $\R^k$.
	\item The cost $\sum_{j\in \mc{S}} c_j$ is minimal among all subsets satisfying (1).
\end{enumerate}

\paragraph{Solving the optimization problem}
\begin{definition}[Matroid]
A matroid is a pair $(E,\mc{I})$ where $E$ is a finite set and $\mc{I}\subset 2^E$ is a non-empty collection of subsets of $E$, called independent sets, such that
\begin{enumerate}
	\item If $I\in \mc{I}$ and $I'\subset I$, then $I'\in \mc{I}$.
	\item If $I_1, I_2\in \mc{I}$ and $\lvert I_1 \rvert < \lvert I_2 \rvert$, then there exists an element $e\in I_2\setminus I_1$ such that $I_1 \cup \{e\}\in \mc{I}$.
\end{enumerate}
\end{definition}

\begin{theorem}[Matroid optimisation, \cite{edmonds1971matroids}]
Let $(E,\mc{I})$ be a matroid and $c\colon E\to \R_{\geq 0}$ a cost function on the elements of $E$.
Then, the problem of finding a maximal independent set $I\in \mc{I}$ minimising the cost $\sum_{e\in I} c(e)$ can be solved by a greedy algorithm that iteratively adds the element $e\in E$ with lowest cost $c(e)$ such that $I\cup \{e\}\in \mc{I}$.
\end{theorem}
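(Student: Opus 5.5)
We prove this classical result of Edmonds by an exchange argument, comparing the greedy output with an optimal basis. First, all maximal independent sets (bases) have the same cardinality $r$. If $B_1,B_2$ are bases with $\lvert B_1\rvert<\lvert B_2\rvert$, axiom (2) gives some $e\in B_2\setminus B_1$ with $B_1\cup\{e\}\in\mc{I}$, which contradicts the maximality of $B_1$. Since costs are non-negative and every independent set extends to a basis, minimising over maximal independent sets means minimising over bases, all of size $r$.

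Next we check that the greedy output is a basis. Order $E$ as $e_1,\dots,e_n$ with $c(e_1)\le\dots\le c(e_n)$, breaking ties arbitrarily. Process the elements in this order and add $e$ whenever $I\cup\{e\}\in\mc{I}$; this is exactly the algorithm in the statement. Call the output $G=\{g_1,\dots,g_r\}$, listed in insertion order so that $c(g_1)\le\dots\le c(g_r)$. Suppose $G$ were not maximal, so $G\cup\{e\}\in\mc{I}$ for some $e\notin G$. Then by axiom (1), $G'\cup\{e\}\in\mc{I}$ for the partial greedy set $G'\subseteq G$ present when $e$ was examined, so $e$ would have been added. Hence $G$ is a basis.

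The core step compares $G$ with any basis $B=\{b_1,\dots,b_r\}$ sorted so that $c(b_1)\le\dots\le c(b_r)$. We show $c(g_i)\le c(b_i)$ for every $i$, and summing gives $c(G)\le c(B)$. Suppose instead that $i$ is the first index with $c(b_i)<c(g_i)$. Set $I_1=\{g_1,\dots,g_{i-1}\}$ and $I_2=\{b_1,\dots,b_i\}$; both are independent by axiom (1). Since $\lvert I_1\rvert<\lvert I_2\rvert$, axiom (2) gives $b_j\in I_2\setminus I_1$ with $I_1\cup\{b_j\}\in\mc{I}$, and $c(b_j)\le c(b_i)<c(g_i)$. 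Then $b_j$ precedes $g_i$ in the sorted order. When $b_j$ was examined, the current greedy set was some $G'\subseteq I_1$, and $G'\cup\{b_j\}\in\mc{I}$ by axiom (1). So greedy would have added $b_j$, which makes $b_j\in G$. Because $b_j$ comes before $g_i$ in the order, it would be one of $g_1,\dots,g_{i-1}$, which contradicts $b_j\notin I_1$.

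The main subtlety is the bookkeeping of the processing order when costs tie. The statement "$b_j$ precedes $g_i$" follows from the strict inequality $c(b_j)<c(g_i)$ regardless of how ties are broken, and that is why it is useful to pick $i$ as the first strict violation. Everything else uses only the two axioms directly. For the application in this paper we then only need to note that the vectors $\{P(g_j)\}$, with independence meaning linear independence in $\R^k$, form a vector matroid whose bases are exactly the feasible sets $\mc{S}$. This holds as long as the $P(g_j)$ span $\R^k$; otherwise the greedy output instead certifies that some selected classes remain unexplained.
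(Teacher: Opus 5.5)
Your proof is correct. The paper gives no argument for this statement: it quotes Edmonds and uses the result as a black box, so the relevant comparison is with the cited source.

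\textbf{Comparison with Edmonds.} Edmonds' treatment is polyhedral. The greedy set is shown to be optimal over the whole matroid polytope $\{x\ge 0 : x(A)\le r(A)\ \forall A\subseteq E\}$ by exhibiting an explicit dual solution supported on the nested prefixes $\{e_1,\dots,e_m\}$ of the sorted order, with dual values given by consecutive cost differences, and then invoking LP duality. You instead give the elementary exchange argument (essentially Gale's componentwise-domination proof). It uses only the two axioms and proves more than is needed, namely $c(g_i)\le c(b_i)$ for every $i$. So the greedy basis is componentwise cheapest, and hence simultaneously optimal for the sum, for the bottleneck cost, and for any symmetric monotone objective. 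The polyhedral route buys integrality of the matroid polytope and a dual optimality certificate, which is what extends to matroid intersection and polymatroids. Your route buys a short, self-contained verification, which is all the paper needs for the vector matroid of the projections $p_j$.

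\textbf{Minor points.}
\begin{enumerate}
\item The minimality of $i$ is never used. Any index with $c(b_i)<c(g_i)$ gives the same contradiction; what handles ties is the strict inequality, not the choice of the first violation.
\item Non-negativity of $c$ plays no role. Maximal independent sets are by definition the bases, and you showed they all have size $r$.
\item Your assertion that the sorted scan is \enquote{exactly} the algorithm of the statement needs a line of justification. By axiom (1), an element rejected against a partial greedy set stays infeasible for every later superset, so the scan always adds a cheapest currently feasible element. One would still have to check the converse, that every tie-breaking run of the statement's rule arises from some scan order.
\end{enumerate}
It is simpler to run your core step directly on the statement's rule. At iteration $i$ the current set is exactly $I_1=\{g_1,\dots,g_{i-1}\}$, and $b_j\notin I_1$ is feasible for it and strictly cheaper than $g_i$, contradicting the choice of $g_i$. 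This version covers arbitrary tie-breaking and does not even need the $g_i$ to be listed in order of cost, since summing $c(g_i)\le c(b_i)$ over $i$ already gives $c(G)\le c(B)$.
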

Our optimisation problem can be reformulated as a matroid optimisation problem.
Here the relevant matroid is the vector matroid on the ground set $[1,\dots,J]$, where a subset $I\subset [1,\dots,J]$ is independent exactly when $\{p_j\}_{j\in I}$ is linearly independent.

Hence, we have gathered all ingredients we need for formulating our \textsc{circol} algorithm in \cref{alg:circular-dictionary-selection}.
We will conclude this section with a paragraph interpreting ordinary Laplacian eigenmaps and circular coordinates as having the same motive of being \emph{integrals over certain eigenvectors of the 1-Hodge Laplacian.}

\begin{algorithm}[tb]
\caption{The \textsc{circol} algorithm for selecting circular coordinates from a dictionary}
\label{alg:circular-dictionary-selection}
\begin{scriptsize}
	\begin{algorithmic}[1]
\Require Point cloud $X$; dictionary $\mathcal{G}=\{g_j\colon X\to \R/\Z\}_{j=1}^J$; filtered simplicial complex $(\mathcal{S}_\varepsilon)_\varepsilon$
\Ensure Subset $\mathcal{S}\subseteq \{1,\dots,J\}$ whose dictionary elements explain the selected topology with minimal total cost
\State Compute persistent cohomology of $(\mathcal{S}_\varepsilon)_\varepsilon$ in degree $1$
\State Choose a scale $\varepsilon$ and $k$ persistent classes alive at that scale
\State Construct $\mathcal{S}_\varepsilon$ and the cochain inner product matrix $M$ on $C^1(\mathcal{S}_\varepsilon;\R)$
\State Lift the selected classes to integer cocycles $\beta_1,\dots,\beta_k\in C^1(\mathcal{S}_\varepsilon;\Z)$
\State Compute discrete harmonic representatives $\alpha_1,\dots,\alpha_k\in C^1(\mathcal{S}_\varepsilon;\R)$
\State Set $A\gets[\alpha_1\ \cdots\ \alpha_k]$ and $Q\gets A^\top M A$
\For{$j=1,\dots,J$}
	\ForAll{oriented edges $e=(x,y)\in E_1$}
		\State Choose $\omega_j(e)\in(-1/2,1/2]$ with $\omega_j(e)\equiv g_j(y)-g_j(x)\pmod{\Z}$
	\EndFor
	\State Set $c_j\gets \omega_j^\top M\omega_j$
	\State Set $p_j\gets Q^{-1}A^\top M\omega_j$
\EndFor
\State Sort the indices $j$ by increasing cost $c_j$
\State Initialise $\mathcal{S}\gets\varnothing$
\ForAll{indices $j$ in sorted order}
	\If{$p_j$ is linearly independent of $\{p_\ell\}_{\ell\in\mathcal{S}}$}
		\State Set $\mathcal{S}\gets\mathcal{S}\cup\{j\}$
	\EndIf
	\If{$|\mathcal{S}|=k$}
		\State \textbf{break}
	\EndIf
\EndFor
\State \Return $\{g_j\}_{j\in\mathcal{S}}$
\end{algorithmic}
\end{scriptsize}

\end{algorithm}
\paragraph{Diffusion and circular coordinates as edge integrals}
Diffusion maps and Laplacian eigenmaps are closely related spectral embedding methods built from a weighted neighbourhood graph on the data \cite{coifman2006diffusion,belkin2003laplacian}.
Laplacian eigenmaps use low-frequency eigenvectors of a graph Laplacian as smooth real-valued coordinates on the data.
Diffusion maps usually normalise the same kernel graph to a Markov transition operator and scale the resulting eigenvectors according to a diffusion time.
Modulo these normalisation choices, both methods use eigenvectors of graph-Laplacian-type operators as real-valued coordinates.
For the present comparison we will use the unnormalised graph-Laplacian model, but note that the same structure carries on to weighted Laplacians.
Let $\Gamma$ be the graph given by the $1$-skeleton of the simplicial complex $\mc{S}_\varepsilon$.
Let
\[
d_0=B_1^\top\colon C^0(\Gamma;\R)\to C^1(\Gamma;\R),
\qquad
d_1=B_2^\top\colon C^1(\mc{S}_\varepsilon;\R)\to C^2(\mc{S}_\varepsilon;\R)
\]
be the first two coboundary maps.
With the standard Euclidean cochain inner products, the graph Laplacian on vertices and the $1$-Hodge Laplacian are
\[
L_0=d_0^*d_0=B_1B_1^\top,
\qquad
L_1=d_0d_0^*+d_1^*d_1=B_1^\top B_1+B_2B_2^\top.
\]
This is the standard combinatorial Hodge viewpoint on a simplicial complex \cite{Eckmann:1944}.
Assume that $\Gamma$ is connected, so that $0=\lambda_0<\lambda_1\leq\lambda_2\leq\dots$.
If $\psi_k\in C^0(\Gamma;\R)$ is a non-constant eigenvector of $L_0$ with eigenvalue $\lambda_k$, define
\[
\phi_k\coloneq d_0\psi_k=B_1^\top\psi_k\in C^1(\Gamma;\R).
\]
Then $\phi_k$ is an exact $1$-cochain and lies in the gradient part of the $1$-Hodge spectrum.
Indeed, $d_1\phi_k=d_1d_0\psi_k=0$, and
\[
L_1\phi_k
=d_0d_0^*d_0\psi_k+d_1^*d_1d_0\psi_k
=d_0L_0\psi_k
=\lambda_k\phi_k.
\]
On an oriented edge $e=(u,v)$, the cochain $\phi_k$ is the edge difference
\[
\phi_k(e)=\psi_k(v)-\psi_k(u).
\]
Therefore, for any oriented path $p$ from a base vertex $v_0$ to a vertex $v$, with signs chosen according to the path orientation,
\[
\int_p\phi_k\coloneq \sum_{e\in p}\phi_k(e)=\psi_k(v)-\psi_k(v_0).
\]
In this sense, the diffusion coordinate $\psi_k$ is obtained (up to an additive constant) by integrating the exact edge cochain $\phi_k$, representing an eigenvector of the Hodge Laplacian.

This is analogous to the construction of circular coordinates from persistent cohomology \cite{DeSilva2009persistent}, where one integrates a harmonic representative of an integer cohomology class.
For diffusion coordinates, the integrated $1$-cochain is exact, so its period around every cycle is zero and the resulting coordinate is a real-valued function on vertices.
For circular coordinates, the integrated $1$-cochain is harmonic and represents an integer cohomology class, so its periods around cycles may be non-zero integers and the resulting coordinate is well-defined only modulo $\Z$.
Both cases satisfy the local triangle condition $d_1\alpha=0$, so integration around the boundary of every $2$-simplex vanishes.
In contrast, coexact curl eigenvectors of the $1$-Hodge Laplacian generally do not lie in $\ker d_1$ and integrating them around triangle boundaries can give non-zero values, so their path integrals do not define node-level coordinates.

\section{Experiments}
\label{sec:experiments}

We first validate \textsc{circol} on synthetic data in \cref{sec:synthetic-experiment}.
We then test it on molecular dynamics data in \cref{sec:molecular-torsion}, where the task is to identify the torsion angles that explain the rotational symmetries in the molecular conformations.
Finally, we test it on neural data of head direction cells in \cref{sec:head-direction-experiment}.

\subsection{Synthetic point clouds}
\label{sec:synthetic-experiment}

\begin{figure}[t]
	\centering
	\includegraphics[width=\linewidth]{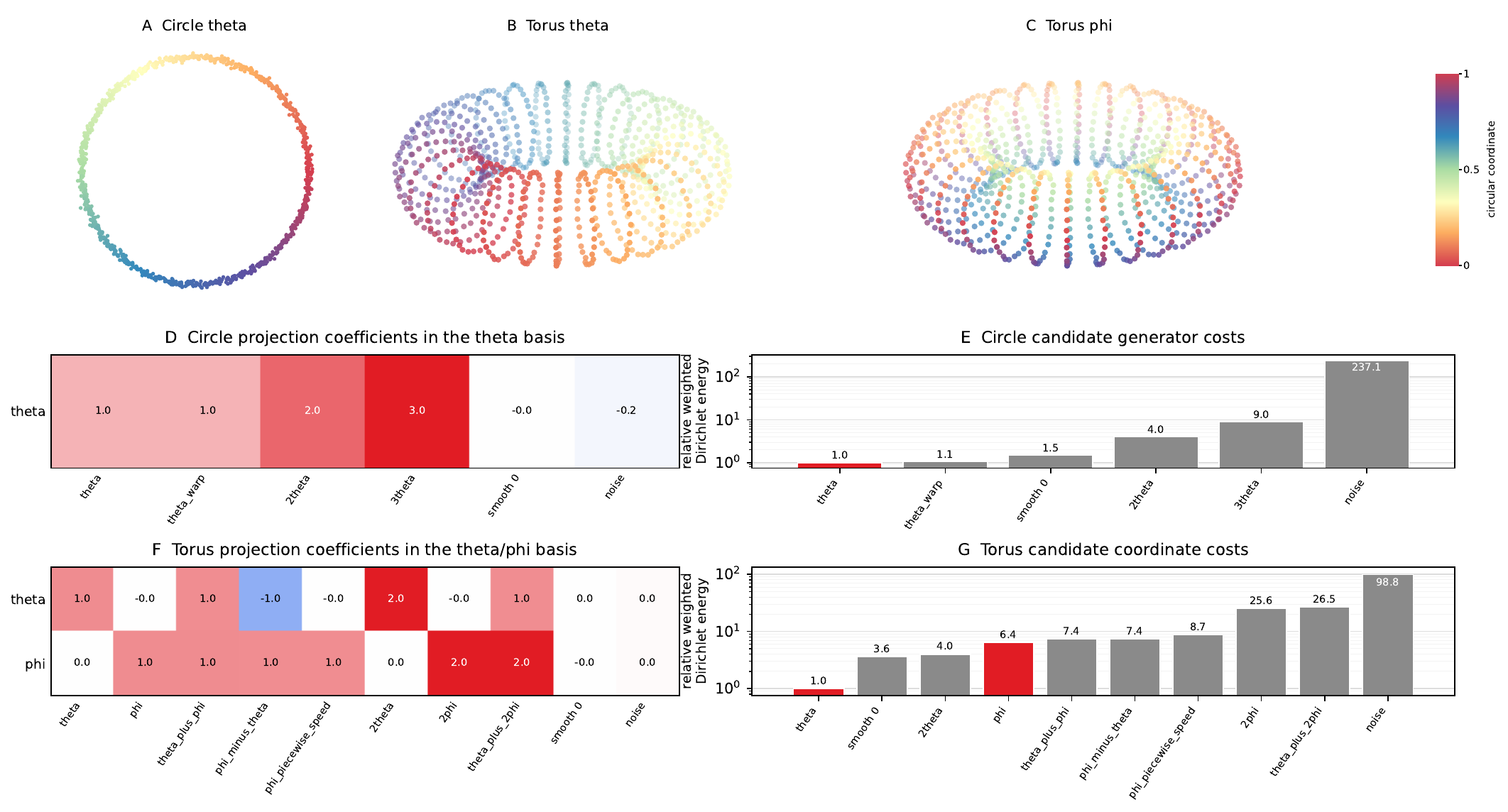}
	\caption[Synthetic validation]{\label{fig:synthetic-main}
		\textbf{Synthetic validation on a noisy circle (\emph{A}) and torus (\emph{B--C}).}
		Expected circular coordinates are shown in \emph{A--C}.
		Panel \emph{D} and \emph{F} show the projection matrices of the dictionary elements onto the harmonic representatives of the selected persistent classes, expressed in the basis of the ground truth cohomology classes.
		\emph{E} and \emph{G} show the energies of the dictionary elements relative to the selected ones (normalised to 1), with the selected elements marked in red.
	}
\end{figure}

We first test \textsc{circol} on controlled synthetic point clouds where the ground-truth circular coordinates are known.
The examples are a noisy circle with one cohomology generator and a torus with two.
The dictionaries contain the true coordinates, higher-winding alternatives such as $2\theta$ and $3\theta$, linear combinations in the case of the torus, non-linear transformations of the coordinates, smooth zero-degree coordinates, and noise.
In \cref{fig:synthetic-main} \emph{A--C}, we show the three ground truth circular coordinates on the circle and torus.

As described in \cref{alg:circular-dictionary-selection}, \textsc{circol} computes the persistent cohomology of the point cloud, returning one significant class for the circle and two significant classes for the torus.
Then for each feature, it constructs a simplicial complex at a scale where the feature is alive, computes inner product on cochains, and then projects the cohomology representatives to the discrete harmonic representatives of the selected classes.

For every dictionary element of circular coordinates, we then compute the associated $1$-cochain by taking angular differences along edges of the simplicial complex.
The inner product on cochains is used to project these dictionary cochains onto the space spanned by the harmonic representatives of the selected persistent classes, and we will then express these projections in the basis of the harmonic representatives.
We show the projection matrices in \cref{fig:synthetic-main} \emph{D} and \emph{F}, but use for the visualisation the identified ground truth cohomology basis.

Although not constrained to be integer-valued, the projections recover the expected integer winding numbers: on the circle, $\theta$ and the warped $\theta$, which is theta with non-uniform winding speed, have degree one, $2\theta$ has degree two, $3\theta$ has degree three, and zero-degree functions project to zero.
Only the noise coordinate has a significant non-integer projection, but it is still significantly below $1$.
This is to be expected, as our theorem for integer projections and winding numbers required continuous circular coordinates and thus well-defined and curl-free cochain lifts.

On the torus, the same computation recovers the expected integer degree vectors for $\theta$, $\phi$, and mixed coordinates such as $\theta+\phi$.
Thus the finite-sample inner product and projection step behave as predicted by the continuous formulation.

The selection step then chooses the lowest-energy full-rank basis.
On the circle, $2\theta$ and $3\theta$ span the same real cohomology but have approximately four and nine times the energy of $\theta$, as expected from their winding numbers.
The warped $\theta$ has slightly higher energy than $\theta$, and the noise coordinate has very high energy/cost, so \textsc{circol} selects $\theta$ as the best explanation of the persistent loop.
On the torus, the algorithm selects $\theta$ and $\phi$ rather than more expensive mixed or higher-winding alternatives.

\subsection{Molecular conformations from quantum molecular dynamics}

\label{sec:molecular-torsion}

\begin{figure}[t]
	\centering
	\includegraphics[width=\linewidth]{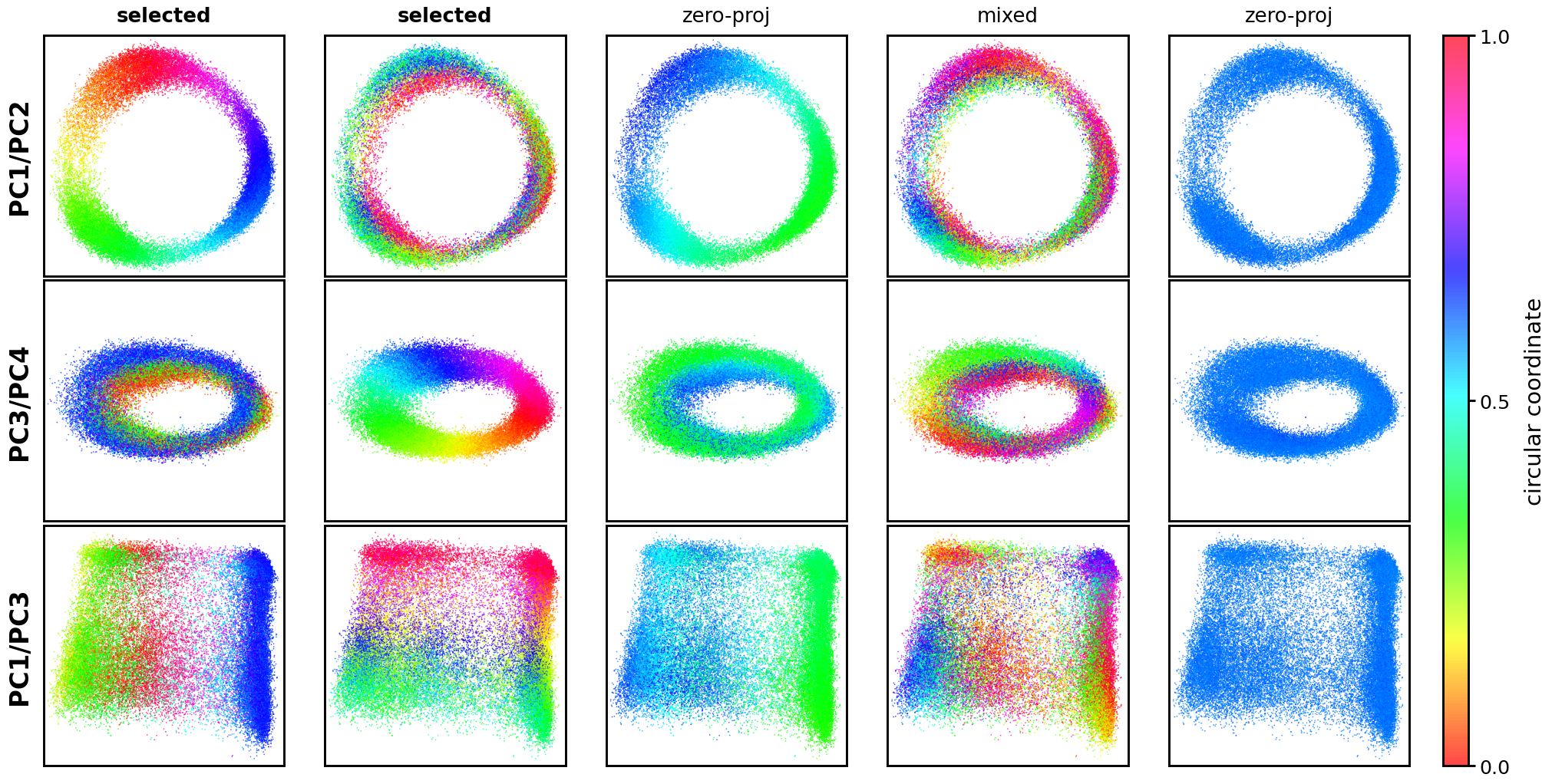}
	\caption[Ethanol torsion candidates]{\label{fig:ethanol-candidates}
		\textbf{Ethanol configurations coloured by representative torsion candidates.}
		The $y$-axis distinguishes different projections on principal components of the data, with PCA 1--2 and PCA 3--4 showing the two topological features.
		\emph{Selected:} The two selected torsions, which are the lowest-energy explanations of the two persistent classes, show an explanation of the PC1--PC2 loop and the PC3--PC4 loop respectively.
		\emph{zero-proj:} Two torsion candidates with trivial projection to harmonic generators. As can be seen, these coordinates do not wrap around the loops in the data and thus do not explain the persistent classes.
		\emph{mixed:} Torsion candidate that is a superposition of both base cohomology classes. It has non-trivial projection to both harmonic generators, but is not selected because it has higher energy than the two base torsions, essentially amounting to a \enquote{correct} but \enquote{overly complicated} explanation of topology.
	}
\end{figure}
\begin{figure}[t]
	\centering
	\includegraphics[width=0.58\linewidth]{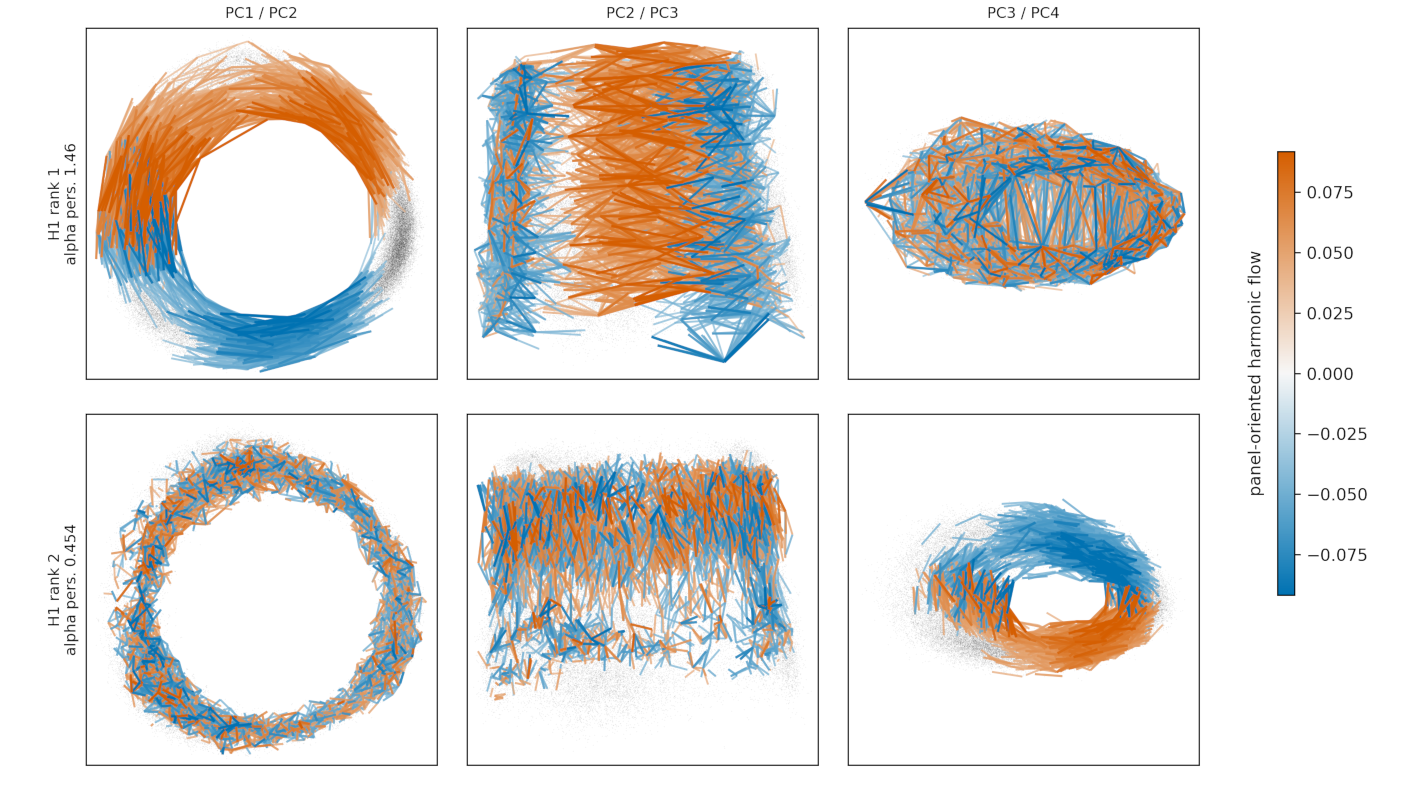}
	\includegraphics[width=0.4\linewidth]{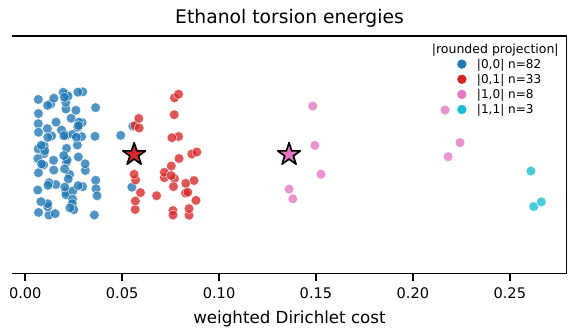}
	\caption[Ethanol harmonic representatives and energies]{\label{fig:costs_harmonic_ethanol}
	\textbf{Ethanol harmonic representatives and energies}
		\emph{Left:} The harmonic representative of the two persistent classes for ethanol. Colour indicates orientation of the cochain on edges of the simplicial complex, only edges with strongest signal are drawn.
		\emph{Right:} The Dirichlet energy of all candidate torsions, with the selected torsion marked in red.
		Torsion candidates are grouped according to their (absolute value of) coordinates in the projection onto the harmonic basis of the selected persistent cohomology classes.
		While $(0,0)$ projections have low Dirichlet energy, they do not contribute to spanning a basis of the desired harmonic subspace.
		Circular coordinates representing a superposition of both persistent cohomology classes have the largest Dirichlet energy.
	}
\end{figure}
We next test \textsc{circol} on molecular dynamics data from Chmiela et al.\ \cite{chmiela2018exact}, using the ethanol, toluene, and malonaldehyde data.
Each observation is a molecular conformation represented as a high-dimensional vector, together with energy and force labels in the original dataset.
For our experiment, however, the selection step only sees the coordinates of the sampled point cloud and a large dictionary of candidate circular functions, obtained by evaluating many possible torsion angles on the same conformations.
We show PCA projections of the dataset and a selection of candidate torsions in \cref{fig:ethanol-candidates}.

Functional-group rotations and coupled dihedral motions create circular or toroidal structure in the conformational ensemble, but this structure is embedded nonlinearly in a high-dimensional cloud and can be obscured by other degrees of freedom.
Moreover, a molecule can have many plausible torsion candidates, most of which are chemically valid angles but do not explain the dominant topology of the sampled trajectory.
The task is to decide which candidate torsion, among a long list of alternatives, accounts for that topology.

These data were also analysed in ManifoldLasso and TSLasso \cite{koelle2022manifold,koelle2024consistency}.
However, the authors in the cited works had to treat the torsion angles as arbitrary real-valued coordinate functions, leading to a non-continuity where the angle wraps around at $2\pi$ and could not measure nor guarantee the correct cohomological behaviour of the torsion candidates.

For ethanol, toluene, and malonaldehyde, \textsc{circol} first extracts the persistent circular feature from the point cloud and then projects all candidate torsion cochains onto the corresponding harmonic representative.
The selected coordinate is the lowest-energy dictionary element whose projection explains the persistent class.
For ethanol, \cref{fig:costs_harmonic_ethanol} shows the two harmonic representatives used for these projections on the left and the corresponding energy comparison of the torsion candidates on the right.
In this way, \textsc{circol} recovers the physically meaningful torsional symmetry from topology alone, using the candidate torsions only as possible interpretations and without using any ground-truth torsion label during selection.

\subsection{Head-direction neurons}
\label{sec:head-direction-experiment}

\begin{figure}[t]
	\centering
	\includegraphics[width=0.8\linewidth]{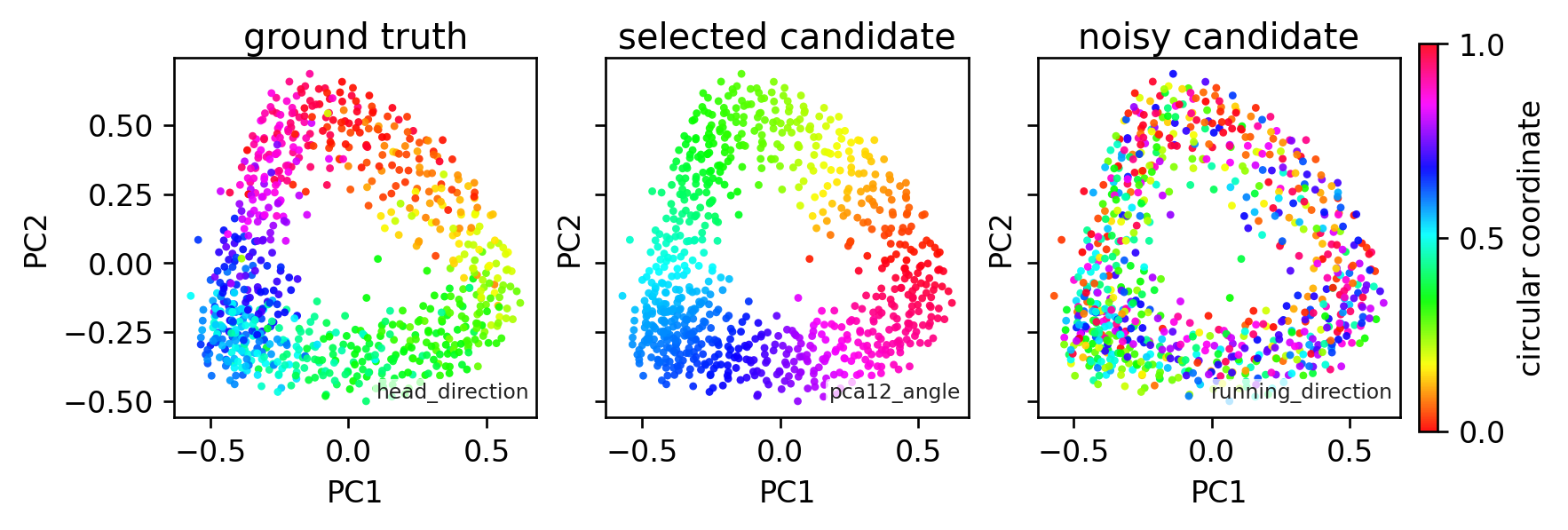}
	\includegraphics[width=0.8\linewidth]{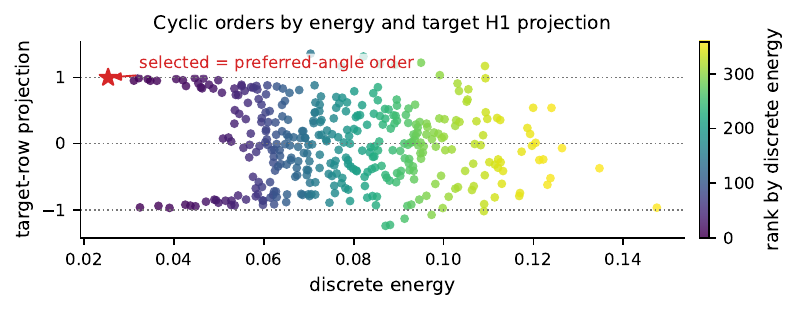}
	\caption[Head-direction experiment]{\label{fig:head-direction-coordinates}
	\label{fig:head-direction-cyclic-rank}
	\textbf{Head-direction experiment:} \emph{Top:}	
		The selected cyclic-order decoder varies smoothly in the first two PCA coordinates and roughly agrees with the tracked head direction up to orientation.
		A random cyclic order of the same seven units is visibly noisy and does not explain the persistent loop.
		\emph{Bottom:}
		Cohomology projection and energy of all $360$ cyclic-order candidates of the seven selected neurons.
		The red marker is both the selected candidate and the ground-truth ordering.
		The $x$-axis is the Dirichlet energy of the cochain associated to the candidate, and thus measures the cost in the optimisation problem.
		The $y$-axis is the projection of the candidate cochain onto the harmonic representative of the selected persistent class, and thus measures how well the candidate explains the persistent loop.
		While for perfect continuous circular coordinates, this projection is expected to be an integer, for increasingly noisy candidate cochains the projection degrades from pure $-1/0/1$ values.
	}
\end{figure}

Many animals maintain an internal sense of orientation while they move.
In the postsubiculum, head-direction cells encode this orientation by firing at preferred angles, making this population a natural biological encoding of a circular coordinate \cite{taube1990headI}.

We use a curated dataset of mouse postsubiculum recordings \cite{duszkiewicz2025dandiHeadDirection}, derived from \cite{duszkiewicz2024local,duszkiewicz2025figshareLocalOrigin}.
We analyse one open-field epoch, which contains spike times, curated head-direction labels, the animal's position, and the tracked head direction.
The point cloud is built from population activity of annotated head-direction cells after standard binning, smoothing, normalisation, and PCA.
Thus the ground-truth circular variable is known, but the geometry on which \textsc{circol} operates comes from nonlinear neural activity rather than from the behavioural angle itself.

We will first do a dictionary selection with the tracked head direction included in the dictionary to confirm that \textsc{circol} identifies the expected circular coordinate.
It assigns a coefficient close to $1$ to this coordinate on the selected persistent class, confirming that the loop found in the neural point cloud is the expected orientation circle.
In a setting where the meaning of the loop was not known, this would identify which measured circular quantity explains the topology; in our setting, this is another proof of concept that the method works as expected.
The agreement between the neural coordinates and the behavioural head direction is visible in \cref{fig:head-direction-coordinates}.

We then removed the tracked angle from the candidate dictionary:
We choose seven head-direction cells whose preferred angles cover the circle, but then pretend that their cyclic order and exact firing patterns are unknown.
We build one circular candidate for each possible cyclic ordering of these seven cells, up to rotations and reversals.
Then, we test \textsc{circol} to identify the ground-truth cyclic order of the seven cells from the neural point cloud alone, without any behavioural or head-direction reference or ground-truth annotations.

Among the $360=7!/(7\cdot 2)$ resulting candidates, \textsc{circol} selects exactly the correct permutation.
\Cref{fig:head-direction-cyclic-rank} shows all cyclic-order candidates, their projection to the cohomology class,  and marks the selected ordering.
Even when scaling up to hundreds, or even thousands or more, of candidate cyclic orders, \textsc{circol} is linear in the number of candidates, as the persistent cohomology and harmonic representatives are computed only once, and the projection and selection steps are linear in the number of candidates.

Thus, \textsc{circol} works on real-world neural data and correctly infers neural ordering simply from point cloud topology.
\FloatBarrier

\section{Discussion}
\label{sec:discussion}
\paragraph{Summary}
We have made the following contributions:
\begin{enumerate}
	\item We have presented \textsc{circol}, a method for dictionary selection for circular coordinates using persistent cohomology, comparing reference (persistent) cohomology classes to candidate circular coordinates by the classes of their pulled-back angular forms.
	In particular, we translate minimal-energy selection in this setting as a minimum-weight basis problem in a vector matroid.
	\item \textsc{circol} derives a finite-sample approximation using persistent cohomology, discrete harmonic representatives, and a density-corrected cochain inner product.
	\item We show that the inner product on simplicial cochains converges to the continuous inner product on differential forms on the underlying manifold in the infinite-sample limit.
	\item We use the obtained metrics both as a selector and a diagnostic for unexplained persistent classes, topologically trivial candidates, and the relation between candidate circular coordinates.
	\item We validate \textsc{circol} on synthetic examples, on molecular simulations and on head-direction neural data with circular structure.
\end{enumerate}
\paragraph{Limitations}
\textsc{circol} starts from persistent cohomology, and therefore depends on the circular structure being captured by a strong enough $H^1$ signal at the relevant scale.
This also means that \textsc{circol} incurs the computational cost of persistent cohomology, which does not scale to arbitrarily large point clouds.
Furthermore, many meaningful features in data are not topological circles or measurable by $H^1$ cohomology.
While expected, this means that our method is not applicable in many real-world datasets without topological signal.
Furthermore, our consistency result, which is similar in form to standard consistency results for other operators, is asymptotic in a regime where \emph{both} $h_N\to 0$ \emph{and} $N h_N^d/\log N\to\infty$.
This means that we expect not only our total sample size, but also the number of neighbors per point, to go to infinity.
In computations, however, one usually tries to keep the neighbourhood size modest even for large sample sizes to preserve sparsity and computational efficiency.
\paragraph{Future work}
Estimators for the cochain inner product with guarantees in low-sample settings are therefore an important direction.
A further direction is to relax the clean manifold model, since experimental data may be noisy, stratified, or only approximately manifold-like.
In line with usual consistency results in the literature, we assume that the data are sampled from a manifold without noise.
Finally, robust variants should allow almost circular coordinates coming from almost harmonic representatives, so that high-noise cyclic structure can be compared with the dictionary without requiring exact circular cohomology.
This could, for example, catch topological structures where the hole is \enquote{obfuscated} by a small number of outliers.


\section*{Acknowledgements}
First of all, \textsc{vpg} thanks his advisor Michael T.\ Schaub for frequent discussions and a generous support on very many levels.
\textsc{vpg} acknowledges support of the \textsc{dfg} (\enquote{Deutsche Forschungsgemeinschaft}) within Research Training Group 2236, \textsc{unravel}, and support of the European Union (\textsc{erc}, \textsc{high-hopes}, 101039827).
Views and opinions expressed are however those of the author(s) only and do not necessarily reflect those of the European Union or the European Research Council Executive Agency. Neither the European Union nor the granting authority can be held responsible for them.
Part of this project was conducted while on a research visit supported by a scholarship of the \textsc{daad} (German Academic Exchange Service).
This research was undertaken, in part, thanks to support from the Canada Research Chairs Program and from the Vector Institute for AI, through the \textsc{cifar} Pan-Canadian AI Strategy, the Government of Ontario, and leading industry sponsors from across the Canadian economy.

\printbibliography

@article{paik2023circular,
  title   = {Circular coordinates for density-robust analysis},
  author  = {Paik, Taejin and Park, Jaemin},
  journal = {arXiv preprint arXiv:2301.12742},
  year    = {2023}
}

@article{taube1990headI,
  title   = {Head-direction cells recorded from the postsubiculum in freely moving rats. I. Description and quantitative analysis},
  author  = {Taube, Jeffrey S. and Muller, Robert U. and Ranck, James B., Jr.},
  journal = {The Journal of Neuroscience},
  volume  = {10},
  number  = {2},
  pages   = {420--435},
  year    = {1990},
  doi     = {10.1523/JNEUROSCI.10-02-00420.1990}
}

@article{duszkiewicz2024local,
  title   = {Local origin of excitatory-inhibitory tuning equivalence in a cortical network},
  author  = {Duszkiewicz, Adrian J. and Orhan, Pierre and Skromne Carrasco, Sofia and Brown, Eleanor H. and Owczarek, Eliott and Vite, Gilberto R. and Wood, Emma R. and Peyrache, Adrien},
  journal = {Nature Neuroscience},
  volume  = {27},
  pages   = {782--792},
  year    = {2024},
  doi     = {10.1038/s41593-024-01588-5}
}

@misc{duszkiewicz2025dandiHeadDirection,
  title     = {Large-scale recordings of head direction cells in mouse postsubiculum},
  author    = {Duszkiewicz, Adrian and Skromne Carrasco, Sofia and Peyrache, Adrien},
  year      = {2025},
  publisher = {DANDI Archive},
  version   = {0.250207.0025},
  doi       = {10.48324/dandi.000939/0.250207.0025},
  url       = {https://dandiarchive.org/dandiset/000939/0.250207.0025},
  note      = {Data set}
}

@misc{duszkiewicz2025figshareLocalOrigin,
  title     = {Local origin of excitatory-inhibitory tuning equivalence in a cortical network (Duszkiewicz et al. 2024)},
  author    = {Duszkiewicz, Adrian},
  year      = {2025},
  publisher = {figshare},
  doi       = {10.6084/m9.figshare.24921252},
  url       = {https://doi.org/10.6084/m9.figshare.24921252},
  note      = {Data set and code}
}

@article{vandereyken2023methods,
  title     = {Methods and applications for single-cell and spatial multi-omics},
  author    = {Vandereyken, Katy and Sifrim, Alejandro and Thienpont, Bernard and Voet, Thierry},
  journal   = {Nature Reviews Genetics},
  volume    = {24},
  number    = {8},
  pages     = {494--515},
  year      = {2023},
  publisher = {Nature Publishing Group UK London}
}

@article{cheng2024phlower,
  author  = {Cheng, Mingbo and Jansen, Jitske and Reimer, Katharina and Grande, Vincent P. and Nagai, James Shiniti and Li, Zhijian and Kie{\ss}ling, Paul and Grasshoff, Martin and Kuppe, Christoph and Schaub, Michael T. and Kramann, Rafael and Costa, Ivan G.},
  title   = {{PHLOWER} leverages single-cell multimodal data to infer complex, multi-branching cell differentiation trajectories},
  year    = {2025},
  journal = {Nature Methods}
}

@article{Maggs2025TopologyConcurrentCyclicProcesses,
  author  = {Maggs, Kelly and Youssef, Markus K. and Pulver, Cyril and Isma, Jovan and Nguy{\^e}n, T{\^a}m J. and Arzt, Matthias and Karthaus, Wouter and Harrington, Heather A. and Hess, Kathryn and Dotto, G. Paolo},
  title   = {Topology identifies concurrent cyclic processes in single-cell transcriptomics and androgen receptor function},
  journal = {bioRxiv},
  year    = {2025},
  date    = {2025-12-08},
  doi     = {10.1101/2025.01.09.632214},
  url     = {https://doi.org/10.1101/2025.01.09.632214},
  note    = {Preprint, version 3}
}

@article{maehara2019modeling,
  title     = {Modeling latent flows on single-cell data using the Hodge decomposition},
  author    = {Maehara, Kazumitsu and Ohkawa, Yasuyuki},
  journal   = {bioRxiv},
  pages     = {592089},
  year      = {2019},
  publisher = {Cold Spring Harbor Laboratory}
}

@article{su2024hodge,
  title     = {Hodge decomposition of single-cell RNA velocity},
  author    = {Su, Zhe and Tong, Yiying and Wei, Guo-Wei},
  journal   = {Journal of chemical information and modeling},
  volume    = {64},
  number    = {8},
  pages     = {3558--3568},
  year      = {2024},
  publisher = {ACS Publications}
}

@article{jones2024manifold,
  title   = {Manifold Diffusion Geometry: Curvature, Tangent Spaces, and Dimension},
  author  = {Jones, Iolo},
  journal = {arXiv preprint arXiv:2411.04100},
  year    = {2024}
}

@article{jones2024diffusion,
  title   = {Diffusion Geometry},
  author  = {Jones, Iolo},
  journal = {arXiv preprint arXiv:2405.10858},
  year    = {2024},
  doi     = {10.48550/arXiv.2405.10858}
}

@inproceedings{grande2025point,
  title     = {Point-Level Topological Representation Learning on Point Clouds},
  author    = {Grande, Vincent Peter and Schaub, Michael T},
  year      = {2025},
  booktitle = {Forty-second International Conference on Machine Learning}
}

@article{grandeNotAllSmallEigenvalues,
  title         = {Disentangling the Spectral Properties of the Hodge Laplacian: Not All Small Eigenvalues Are Equal},
  author        = {Grande, Vincent P. and Schaub, Michael T.},
  journal       = {arXiv preprint arXiv:2311.14427},
  year          = {2024},
  eprint        = {2311.14427},
  archiveprefix = {arXiv},
  doi           = {10.48550/arXiv.2311.14427}
}

@article{berry2020spectral,
  title     = {Spectral exterior calculus},
  author    = {Berry, Tyrus and Giannakis, Dimitrios},
  journal   = {Communications on Pure and Applied Mathematics},
  volume    = {73},
  number    = {4},
  pages     = {689--770},
  year      = {2020},
  publisher = {Wiley Online Library}
}

@article{coifman2006diffusion,
  title     = {Diffusion maps},
  author    = {Coifman, Ronald R and Lafon, St{\'e}phane},
  journal   = {Applied and computational harmonic analysis},
  volume    = {21},
  number    = {1},
  pages     = {5--30},
  year      = {2006},
  publisher = {Elsevier}
}

@article{belkin2003laplacian,
  title     = {Laplacian eigenmaps for dimensionality reduction and data representation},
  author    = {Belkin, Mikhail and Niyogi, Partha},
  journal   = {Neural computation},
  volume    = {15},
  number    = {6},
  pages     = {1373--1396},
  year      = {2003},
  publisher = {MIT Press}
}

@inproceedings{koelle2024consistency,
  title        = {Consistency of dictionary-based manifold learning},
  author       = {Koelle, Samson J and Zhang, Hanyu and Murad, Octavian-Vlad and Meila, Marina},
  booktitle    = {International Conference on Artificial Intelligence and Statistics},
  pages        = {4348--4356},
  year         = {2024},
  organization = {PMLR}
}

@article{koelle2022manifold,
  title   = {Manifold coordinates with physical meaning},
  author  = {Koelle, Samson J and Zhang, Hanyu and Meila, Marina and Chen, Yu-Chia},
  journal = {Journal of Machine Learning Research},
  volume  = {23},
  number  = {133},
  pages   = {1--57},
  year    = {2022}
}

@inproceedings{scoccola2023toroidal,
  author    = {Scoccola, Luis and Gakhar, Hitesh and Bush, Johnathan and Schonsheck, Nikolas and Rask, Tatum and Zhou, Ling and Perea, Jose A.},
  title     = {Toroidal Coordinates: Decorrelating Circular Coordinates with Lattice Reduction},
  booktitle = {39th International Symposium on Computational Geometry (SoCG 2023)},
  editor    = {Chambers, Erin W. and Gudmundsson, Joachim},
  series    = {Leibniz International Proceedings in Informatics (LIPIcs)},
  isbn      = {978-3-95977-273-0},
  issn      = {1868-8969},
  volume    = {258},
  pages     = {57:1--57:20},
  year      = {2023},
  publisher = {Schloss Dagstuhl -- Leibniz-Zentrum f{\"u}r Informatik},
  address   = {Dagstuhl, Germany},
  doi       = {10.4230/LIPIcs.SoCG.2023.57},
  url       = {https://drops.dagstuhl.de/entities/document/10.4230/LIPIcs.SoCG.2023.57}
}

@article{schonsheck2024spherical,
  title     = {Spherical coordinates from persistent cohomology},
  author    = {Schonsheck, Nikolas C and Schonsheck, Stefan C},
  journal   = {Journal of Applied and Computational Topology},
  volume    = {8},
  number    = {1},
  pages     = {149--173},
  year      = {2024},
  publisher = {Springer}
}

@article{helein2008harmonic,
  title     = {Harmonic maps},
  author    = {H{\'e}lein, Fr{\'e}d{\'e}ric and Wood, John C},
  journal   = {Handbook of global analysis},
  volume    = {1213},
  pages     = {417--491},
  year      = {2008},
  publisher = {Elsevier Amsterdam}
}

@book{baird2003harmonic,
  title     = {Harmonic morphisms between Riemannian manifolds},
  author    = {Baird, Paul and Wood, John C},
  number    = {29},
  year      = {2003},
  publisher = {Oxford University Press}
}

@article{DeSilva2009persistent,
  author  = {de Silva, Vin and Morozov, Dmitriy and Vejdemo-Johansson, Mikael},
  title   = {Persistent Cohomology and Circular Coordinates},
  journal = {Discrete \& Computational Geometry},
  volume  = {45},
  number  = {4},
  pages   = {737--759},
  year    = {2011},
  doi     = {10.1007/s00454-011-9344-x}
}

@incollection{Perea2020,
  author    = {Perea, Jose A.},
  editor    = {Baas, Nils A.
               and Carlsson, Gunnar E.
               and Quick, Gereon
               and Szymik, Markus
               and Thaule, Marius},
  title     = {Sparse Circular Coordinates via Principal {$\Z$}-Bundles},
  booktitle = {Topological Data Analysis},
  series    = {Abel Symposia},
  volume    = {15},
  year      = {2020},
  publisher = {Springer International Publishing},
  address   = {Cham},
  pages     = {435--458},
  doi       = {10.1007/978-3-030-43408-3_17},
  url       = {https://doi.org/10.1007/978-3-030-43408-3_17}
}

@article{perea2017multiscaleProjective,
  author  = {Perea, Jose A.},
  title   = {Multiscale Projective Coordinates via Persistent Cohomology of Sparse Filtrations},
  journal = {Discrete \& Computational Geometry},
  volume  = {59},
  number  = {1},
  pages   = {175--225},
  year    = {2018},
  doi     = {10.1007/s00454-017-9927-2}
}

@article{rybakken2019decoding,
  author  = {Rybakken, Erik and Baas, Nils A. and Dunn, Benjamin A.},
  title   = {Decoding of Neural Data Using Cohomological Feature Extraction},
  journal = {Neural Computation},
  volume  = {31},
  number  = {1},
  pages   = {68--93},
  year    = {2019}
}

@article{gardner2022toroidal,
  author  = {Gardner, Richard J. and Hermansen, Erik and Pachitariu, Marius and Burak, Yoram and Baas, Nils A. and Dunn, Benjamin A. and Moser, May-Britt and Moser, Edvard I.},
  title   = {Toroidal topology of population activity in grid cells},
  journal = {Nature},
  volume  = {602},
  number  = {7895},
  pages   = {123--128},
  year    = {2022},
  doi     = {10.1038/s41586-021-04268-7}
}

@article{perea2015sliding,
  author  = {Perea, Jose A. and Harer, John},
  title   = {Sliding Windows and Persistence: An Application of Topological Methods to Signal Analysis},
  journal = {Foundations of Computational Mathematics},
  volume  = {15},
  number  = {3},
  pages   = {799--838},
  year    = {2015},
  doi     = {10.1007/s10208-014-9206-z}
}

@article{Eckmann:1944,
  author  = {Eckmann, Beno},
  journal = {Commentarii mathematici Helvetici},
  pages   = {240--255},
  title   = {{Harmonische Funktionen und Randwertaufgaben in einem Komplex.}},
  volume  = {17},
  date    = {1944/1945}
}

@article{Kovacev-Nikolic:2016,
  title   = {Using persistent homology and dynamical distances to analyze protein binding},
  volume  = {15},
  doi     = {10.1515/sagmb-2015-0057},
  number  = {1},
  journal = {Statistical Applications in Genetics and Molecular Biology},
  author  = {Kovacev-Nikolic, Violeta and Bubenik, Peter and Nikoli{\'c}, Dragan and Heo, Giseon},
  year    = {2016},
  month   = {1}
}

@article{edmonds1971matroids,
  title     = {Matroids and the greedy algorithm},
  author    = {Edmonds, Jack},
  journal   = {Mathematical programming},
  volume    = {1},
  number    = {1},
  pages     = {127--136},
  year      = {1971},
  publisher = {Springer}
}

@article{chmiela2018exact,
  title   = {Towards exact molecular dynamics simulations with machine-learned force fields},
  author  = {Chmiela, Stefan and Sauceda, Huziel E. and M{\"u}ller, Klaus-Robert and Tkatchenko, Alexandre},
  journal = {Nature Communications},
  volume  = {9},
  number  = {1},
  pages   = {3887},
  year    = {2018},
  doi     = {10.1038/s41467-018-06169-2}
}
\appendix
\section{Proof of Convergence of the $1$-cochain inner product}
\label{sec:circular-convergence}
In this section, we will give a proof of the convergence theorem for the density-corrected cochain inner product, \cref{thm:cochain-inner-product-consistency-main-text}.

Let $d=\dim \mc{M}$, let $\mu$ be the Riemannian volume measure on $\mc{M}$, and let $\alpha,\beta\in \Omega^1(\mc{M})$ be smooth. Our target quantity is
\begin{equation}
\langle \alpha,\beta\rangle_{L_2(\mu)}\coloneq \int_{\mc{M}} \langle \alpha_x,\beta_x\rangle_g\,d\mu(x),
\end{equation}
where $\langle \cdot,\cdot\rangle_g$ is the pointwise inner product on $T_x^*\mc{M}$ induced by the Riemannian metric.

To connect this to data, we use only information carried by nearby pairs of points. Write $\mathrm{inj}(\mc{M})$ for the injectivity radius of $\mc{M}$. If $x,y\in \mc{M}$ satisfy $d_{\mc{M}}(x,y)<\mathrm{inj}(\mc{M})$, there is a unique minimizing unit-speed geodesic
\begin{equation}
\gamma_{x,y}\colon [0,d_{\mc{M}}(x,y)]\to \mc{M}
\end{equation}
from $x$ to $y$. We define
\begin{equation}
\alpha(x,y)\coloneq \int_0^{d_{\mc{M}}(x,y)} \alpha_{\gamma_{x,y}(t)}\!\bigl(\dot{\gamma}_{x,y}(t)\bigr)\,dt,
\qquad
\beta(x,y)\coloneq \int_0^{d_{\mc{M}}(x,y)} \beta_{\gamma_{x,y}(t)}\!\bigl(\dot{\gamma}_{x,y}(t)\bigr)\,dt.
\end{equation}
These are the continuum quantities that the discrete $1$-cochains will later approximate on short edges. Reversing the geodesic shows that
\begin{equation}
\alpha(y,x)=-\alpha(x,y),
\qquad
\beta(y,x)=-\beta(x,y).
\end{equation}

\paragraph{Step 1: Recover the pointwise inner product from directional averages}
At a fixed point $x$, the quantity $\alpha_x(\theta)$ is the evaluation of the covector $\alpha_x$ on the unit direction $\theta$. Averaging products of these directional evaluations over all unit directions recovers the full inner product.

\begin{lemma}[Sphere-average identity]
Let $x\in \mc{M}$ and let $\alpha_x,\beta_x\in T_x^*\mc{M}$.
Let $S_x\mc{M}\subset T_x\mc{M}$ be the unit sphere, equipped with surface measure $d\sigma_x$, and let
\begin{equation}
\omega_{d-1}\coloneq \int_{S_x\mc{M}} 1\,d\sigma_x
\end{equation}
be the volume of the Euclidean unit sphere in dimension $d$. Then
\begin{equation}
\int_{S_x\mc{M}} \alpha_x(\theta)\beta_x(\theta)\,d\sigma_x(\theta)
=
\frac{\omega_{d-1}}{d}\langle \alpha_x,\beta_x\rangle_g
\end{equation}
\end{lemma}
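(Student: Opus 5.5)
The plan is to reduce the identity to linear algebra on the single inner-product space $(T_x\mc{M},g_x)$, since both sides are bilinear in $(\alpha_x,\beta_x)$. I will first use the metric to identify covectors with vectors: let $a,b\in T_x\mc{M}$ be the vectors dual to $\alpha_x,\beta_x$, so that $\alpha_x(\theta)=g_x(a,\theta)$, $\beta_x(\theta)=g_x(b,\theta)$ and $\langle\alpha_x,\beta_x\rangle_g=g_x(a,b)$. Then I pick a $g_x$-orthonormal basis $e_1,\dots,e_d$ of $T_x\mc{M}$. This basis gives an isometry $T_x\mc{M}\cong\R^d$ that carries $S_x\mc{M}$ to the standard unit sphere $S^{d-1}$ and $d\sigma_x$ to the standard surface measure. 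In these coordinates the left-hand side becomes
\begin{equation}
\sum_{k,l=1}^d a_k b_l \int_{S^{d-1}} \theta_k\theta_l\,d\sigma(\theta).
\end{equation}

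The key step is then the second-moment identity $\int_{S^{d-1}}\theta_k\theta_l\,d\sigma=\frac{\omega_{d-1}}{d}\delta_{kl}$. For $k\neq l$ the integrand is odd under the reflection $\theta_k\mapsto-\theta_k$, which preserves $d\sigma$, so the integral vanishes. For $k=l$, invariance of $d\sigma$ under coordinate permutations shows that all $d$ integrals $\int\theta_k^2\,d\sigma$ are equal. Their sum is $\int_{S^{d-1}}\lvert\theta\rvert^2\,d\sigma=\omega_{d-1}$, so each one equals $\omega_{d-1}/d$. Substituting this into the double sum gives $\frac{\omega_{d-1}}{d}\sum_k a_kb_k=\frac{\omega_{d-1}}{d}g_x(a,b)$, which is the claim.

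There is no real obstacle here. The only point needing care is justifying that the surface measure on $S_x\mc{M}$ induced by $g_x$ matches the Euclidean one under the orthonormal-frame isometry, and that it is invariant under reflections and permutations of coordinates. Both follow because these maps are orthogonal transformations and the surface measure on the sphere is $O(d)$-invariant. The same argument also confirms that $\omega_{d-1}$ as defined in the lemma agrees with the constant used in $m_0$ and $m_2$.
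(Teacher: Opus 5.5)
Your proof is correct and follows the same route as the paper. Both identify $T_x\mc{M}$ with $\R^d$ via an orthonormal frame, expand both covectors in coordinates, and use that the mixed second moments over $S^{d-1}$ vanish while the diagonal ones are equal and sum to $\omega_{d-1}$; you simply spell out the reflection and permutation invariance of $d\sigma$ that the paper summarizes as ``by symmetry''.
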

\begin{proof}
Choose an orthonormal basis of $T_x\mc{M}$ and identify $T_x\mc{M}$ with $\R^d$.
Write $\alpha_x(\theta)=\sum_i a_i\theta_i$ and $\beta_x(\theta)=\sum_j b_j\theta_j$.
Then
\begin{equation}
\int_{S_x\mc{M}} \alpha_x(\theta)\beta_x(\theta)\,d\sigma_x(\theta)
=
\sum_{i,j} a_i b_j \int_{S^{d-1}} \theta_i\theta_j\,d\sigma(\theta).
\end{equation}
By symmetry, the mixed terms vanish and all diagonal terms are equal. Since $\sum_{i=1}^d \theta_i^2=1$ on $S^{d-1}$, each diagonal integral equals $\omega_{d-1}/d$, and the claim follows.
\end{proof}

\paragraph{Step 2: Replace infinitesimal directions by short geodesic integrals}
The previous lemma describes what happens at a single tangent space. To turn it into a pairwise formula, we average over a small geodesic neighbourhood.

Let $\kappa\colon [0,\infty)\to [0,\infty)$ be a bounded measurable kernel profile supported in $[0,1)$ and not identically zero, and define
\begin{equation}
m_0\coloneq \omega_{d-1}\int_0^\infty \kappa(\rho)\rho^{d-1}\,d\rho,
\qquad
m_2\coloneq \omega_{d-1}\int_0^\infty \kappa(\rho)\rho^{d+1}\,d\rho.
\end{equation}
For $h<\mathrm{inj}(\mc{M})$, define the bandwidth-$h$ kernel
\begin{equation}
K_h(x,y)\coloneq h^{-d}\kappa\!\left(\frac{d_{\mc{M}}(x,y)}{h}\right).
\end{equation}
Because $\kappa$ is supported in $[0,1)$, the condition $K_h(x,y)\neq 0$ forces $d_{\mc{M}}(x,y)<h$, so the geodesic integrals above are well-defined on the support of the kernel.

\begin{lemma}[Short-range kernel localization]
Let $(\mc{M},g)$ be a smooth compact $d$-dimensional Riemannian manifold without boundary, let $\alpha,\beta\in \Omega^1(\mc{M})$ be smooth, let $\kappa\colon [0,\infty)\to [0,\infty)$ be bounded and supported in $[0,1)$, and let $0<h<\mathrm{inj}(\mc{M})$. Then, for every $x\in \mc{M}$,
\begin{equation}
\int_{\mc{M}} K_h(x,y)\alpha(x,y)\beta(x,y)\,d\mu(y)
=
h^2\frac{m_2}{d}\langle \alpha_x,\beta_x\rangle_g + O(h^4)
\end{equation}
as $h\to 0$, uniformly in $x$.
\end{lemma}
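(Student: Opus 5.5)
We work in geodesic normal coordinates at $x$. Writing $y=\exp_x(r\theta)$ with $r\in[0,h)$ and $\theta\in S_x\mc{M}$, the Riemannian volume takes the form $d\mu(y)=J_x(r,\theta)\,r^{d-1}\,dr\,d\sigma_x(\theta)$, with Jacobian expansion
\begin{equation}
J_x(r,\theta)=1-\tfrac{1}{6}\mathrm{Ric}_x(\theta,\theta)r^2+O(r^3).
\end{equation}
The remainder is uniform in $x$ by compactness. In these coordinates the minimizing geodesic from $x$ to $y$ is $\gamma_{x,y}(t)=\exp_x(t\theta)$ for $t\in[0,r]$, so
\begin{equation}
\alpha(x,y)=\int_0^r \alpha_{\exp_x(t\theta)}(\dot\gamma(t))\,dt .
\end{equation}
Define $a(t):=\alpha_{\exp_x(t\theta)}(\dot\gamma(t))$ and Taylor expand in $t$: $a(t)=\alpha_x(\theta)+t\,a_1(\theta)+\tfrac{t^2}{2}a_2(\theta)+O(t^3)$. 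Here $a_1(\theta)=(\nabla_\theta\alpha)(\theta)$ is a homogeneous quadratic form in $\theta$, $a_2$ is cubic, and all remainders are uniform in $(x,\theta)$ because $\alpha$ and $g$ are smooth on a compact manifold. Integrating gives
\begin{equation}
\alpha(x,y)=r\,\alpha_x(\theta)+\tfrac{r^2}{2}a_1(\theta)+\tfrac{r^3}{6}a_2(\theta)+O(r^4),
\end{equation}
and $\beta(x,y)$ has the analogous expansion with coefficients $b_1,b_2$.

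Multiplying the two expansions, we organise the product by powers of $r$ and by parity in $\theta$:
\begin{equation}
\alpha\beta=r^2\alpha_x(\theta)\beta_x(\theta)+\tfrac{r^3}{2}\bigl(\alpha_x(\theta)b_1(\theta)+a_1(\theta)\beta_x(\theta)\bigr)+r^4 C(\theta)+O(r^5).
\end{equation}
The $r^3$ coefficient is homogeneous of degree $3$ in $\theta$, hence odd under $\theta\mapsto-\theta$. We then multiply by $J_x=1+O(r^2)$, whose $r^2$ coefficient is even, and by the kernel, which is radial: $K_h=h^{-d}\kappa(r/h)$. Integrating over $S_x\mc{M}$ kills every odd term, so the $r^3$ term contributes zero. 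Everything remaining is $O(r^4)$, uniformly in $x$. The leading term is handled by the sphere-average identity (the first lemma):
\begin{equation}
h^{-d}\int_0^h\kappa(r/h)\,r^{2}\,r^{d-1}\,dr\int_{S_x\mc{M}}\alpha_x(\theta)\beta_x(\theta)\,d\sigma_x=h^{2}\Bigl(\int_0^\infty\kappa(\rho)\rho^{d+1}d\rho\Bigr)\frac{\omega_{d-1}}{d}\langle\alpha_x,\beta_x\rangle_g .
\end{equation}
The substitution $r=h\rho$ was used here, and the right-hand side equals $h^2\frac{m_2}{d}\langle\alpha_x,\beta_x\rangle_g$.

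For the remainder, any term bounded by $C r^4$ contributes at most
\begin{equation}
C h^{-d}\int_0^h \kappa(r/h)\,r^{d+3}\,dr\cdot\omega_{d-1}=O(h^4),
\end{equation}
since $\kappa$ is bounded with support in $[0,1)$. Odd-degree terms at order $r^5$ would also vanish, but we do not need this, since a bound of $O(h^5)\subset O(h^4)$ suffices.

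The main obstacle is justifying the Taylor expansion of $a(t)$ with remainders uniform in $x$, together with the claim that $a_1$ is exactly a quadratic (hence even-degree, so that $\alpha_x b_1$ is odd) polynomial in $\theta$. This holds because $\gamma$ is a geodesic, so $\nabla_{\dot\gamma}\dot\gamma=0$ and $a'(0)=(\nabla_\theta\alpha)(\theta)$ exactly, with no Christoffel correction surviving at $t=0$. I would verify this by differentiating $a(t)=\alpha(\dot\gamma)$ covariantly. Uniformity then follows from compactness and the smooth dependence of $\exp_x$ on $x$ for $r<\mathrm{inj}(\mc{M})$.
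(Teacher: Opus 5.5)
Your proof is correct and follows essentially the same route as the paper: normal coordinates at $x$, a Taylor expansion of the edge integrals in which the cubic term is odd in the direction and integrates to zero against the radial kernel, an $O(h^4)$ bound for the quartic remainder, and the sphere-average identity with the substitution $r=h\rho$ for the leading term. The only difference is cosmetic. The paper Taylor-expands $F_x^\alpha(\xi)=\alpha(x,\exp_x(\xi))$ as a smooth function of $\xi$, so homogeneity and parity of the correction terms are automatic; you expand along rays and obtain them from the geodesic equation $\nabla_{\dot\gamma}\dot\gamma=0$, which yields the explicit coefficient $(\nabla_\theta\alpha)(\theta)$.
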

\begin{proof}
Fix $x\in \mc{M}$ and use geodesic normal coordinates around $x$. Since $\kappa$ is supported in $[0,1)$ and $h<\mathrm{inj}(\mc{M})$, every point in the support of $K_h(x,\cdot)$ can be written uniquely as
\begin{equation}
y=\exp_x(\xi),
\qquad
\xi\in B_h(0)\subset T_x\mc{M}.
\end{equation}
In these coordinates,
\begin{equation}
d_{\mc{M}}(x,\exp_x(\xi))=|\xi|,
\qquad
K_h(x,\exp_x(\xi))=h^{-d}\kappa(|\xi|/h),
\qquad
d\mu(\exp_x(\xi))=J_x(\xi)\,d\xi.
\end{equation}
Define
\begin{equation}
F_x^\alpha(\xi)\coloneq \alpha(x,\exp_x(\xi)),
\qquad
F_x^\beta(\xi)\coloneq \beta(x,\exp_x(\xi)).
\end{equation}
If $\xi\neq 0$, the minimizing geodesic from $x$ to $\exp_x(\xi)$ is $s\mapsto \exp_x(s\xi)$, $0\le s\le 1$, so
\begin{equation}
\label{eq:Fxalpha}
F_x^\alpha(\xi)
=
\int_0^1 \alpha_{\exp_x(s\xi)}\!\bigl((D\exp_x)_{s\xi}[\xi]\bigr)\,ds.
\end{equation}
The same formula holds for $F_x^\beta$. The integrands depend smoothly on $(x,s,\xi)$, so $F_x^\alpha$ and $F_x^\beta$ are smooth in $\xi$, uniformly in $x$ on the compact manifold. Also $F_x^\alpha(0)=0$, and differentiating \eqref{eq:Fxalpha} at $\xi=0$ gives $D_\xi F_x^\alpha(0)[\eta]=\alpha_x(\eta)$ because $(D\exp_x)_0=\mathrm{Id}_{T_x\mc{M}}$. Hence Taylor's theorem at $\xi=0$ yields
\begin{equation}
F_x^\alpha(\xi)
=
\alpha_x(\xi)+A_x^\alpha(\xi,\xi)+R_x^\alpha(\xi),
\end{equation}
where $A_x^\alpha$ is a symmetric bilinear form and $|R_x^\alpha(\xi)|\le C|\xi|^3$ uniformly in $x$. Similarly,
\begin{equation}
F_x^\beta(\xi)
=
\beta_x(\xi)+A_x^\beta(\xi,\xi)+R_x^\beta(\xi),
\qquad
|R_x^\beta(\xi)|\le C|\xi|^3.
\end{equation}
The Jacobian has the normal-coordinate expansion
\begin{equation}
J_x(\xi)=1+Q_x(\xi)+R_x^J(\xi),
\end{equation}
where $Q_x$ is quadratic and $|R_x^J(\xi)|\le C|\xi|^3$ uniformly in $x$.

Now multiply the three expansions. The leading term is the quadratic form $\alpha_x(\xi)\beta_x(\xi)$. The cubic terms come only from multiplying one linear term with one quadratic term:
\begin{equation}
P_{3,x}(\xi)\coloneq \alpha_x(\xi)A_x^\beta(\xi,\xi)+A_x^\alpha(\xi,\xi)\beta_x(\xi).
\end{equation}
Every remaining term is at least quartic in $\xi$: the factor $Q_x(\xi)$ is already quadratic, so even when multiplied by $\alpha_x(\xi)\beta_x(\xi)$ it contributes order $|\xi|^4$, and the remainders are cubic. Therefore
\begin{equation}
\label{eq:product-expansion-localization}
F_x^\alpha(\xi)F_x^\beta(\xi)J_x(\xi)
=
\alpha_x(\xi)\beta_x(\xi)+P_{3,x}(\xi)+R_x(\xi),
\end{equation}
where $P_{3,x}$ is a homogeneous cubic polynomial and $|R_x(\xi)|\le C|\xi|^4$.
The cubic term is odd in $\xi$, whereas the kernel $h^{-d}\kappa(|\xi|/h)$ is radial and the ball $B_h(0)$ is symmetric. Therefore
\begin{equation}
\int_{B_h(0)} h^{-d}\kappa(|\xi|/h)P_{3,x}(\xi)\,d\xi=0.
\end{equation}
Substituting \eqref{eq:product-expansion-localization} into the integral gives
\begin{align}
\int_{\mc{M}} K_h(x,y)\alpha(x,y)\beta(x,y)\,d\mu(y)
&=
\int_{B_h(0)} h^{-d}\kappa(|\xi|/h)\alpha_x(\xi)\beta_x(\xi)\,d\xi \\
&\qquad + O\!\left(\int_{B_h(0)} h^{-d}\kappa(|\xi|/h)|\xi|^4\,d\xi\right).
\end{align}
The remainder term is $O(h^4)$ uniformly in $x$, because
\begin{equation}
\int_{B_h(0)} h^{-d}\kappa(|\xi|/h)|\xi|^4\,d\xi
=
h^4\omega_{d-1}\int_0^1 \kappa(\rho)\rho^{d+3}\,d\rho.
\end{equation}
It remains to evaluate the principal term. Writing $\xi=t\theta$ with $0\le t<h$ and $\theta\in S_x\mc{M}$, we have $\alpha_x(t\theta)\beta_x(t\theta)=t^2\alpha_x(\theta)\beta_x(\theta)$, so
\begin{align}
\int_{B_h(0)} h^{-d}\kappa(|\xi|/h)\alpha_x(\xi)\beta_x(\xi)\,d\xi
&=
\int_0^h h^{-d}\kappa(t/h)t^{d+1}\,dt
\int_{S_x\mc{M}} \alpha_x(\theta)\beta_x(\theta)\,d\sigma_x(\theta) \notag \\
&=
h^2\left(\int_0^1 \kappa(\rho)\rho^{d+1}\,d\rho\right)
\int_{S_x\mc{M}} \alpha_x(\theta)\beta_x(\theta)\,d\sigma_x(\theta) \notag \\
&=
h^2\frac{m_2}{d}\langle \alpha_x,\beta_x\rangle_g,
\end{align}
where the last step uses the sphere-average identity and the definition of $m_2$. Combining this with the $O(h^4)$ remainder proves the lemma.
\end{proof}

\paragraph{Step 3: Integrate over the manifold and correct for sampling density}
Integrating the previous lemma over $x$ gives the first global formula.

\begin{corollary}[Continuum pairwise formula under uniform sampling]
Let $(\mc{M},g)$ be a smooth compact $d$-dimensional Riemannian manifold without boundary, let $\alpha,\beta\in \Omega^1(\mc{M})$ be smooth, and let $\kappa\colon [0,\infty)\to [0,\infty)$ be bounded and supported in $[0,1)$. Then
\begin{equation}
\langle \alpha,\beta\rangle_{L_2(\mu)}
=
\lim_{h\to 0}
\frac{d}{m_2h^2}
\int_{\mc{M}}\int_{\mc{M}}
K_h(x,y)\alpha(x,y)\beta(x,y)\,d\mu(y)\,d\mu(x).
\end{equation}
\end{corollary}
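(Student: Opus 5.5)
The plan is to integrate the short-range kernel localization lemma over $x\in\mc{M}$ and use that its error term is uniform in $x$. For each $0<h<\mathrm{inj}(\mc{M})$ and each $x$, that lemma gives
\begin{equation}
\int_{\mc{M}} K_h(x,y)\alpha(x,y)\beta(x,y)\,d\mu(y)
=
h^2\frac{m_2}{d}\langle \alpha_x,\beta_x\rangle_g + r_h(x),
\qquad
\sup_{x\in\mc{M}}|r_h(x)|\le C h^4,
\end{equation}
with $C$ independent of $x$ and $h$.

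First, I will check that the double integral is well defined. The integrand is measurable because $\kappa$ is measurable and $d_{\mc{M}}$, $\alpha(x,y)$, $\beta(x,y)$ are continuous on $\{d_{\mc{M}}(x,y)<h\}$. It is bounded by $h^{-d}\lVert\kappa\rVert_\infty\lVert\alpha\rVert_\infty\lVert\beta\rVert_\infty h^2$, since edge integrals over geodesics of length less than $h$ are bounded by $h$ times the sup-norm of the form. Compactness of $\mc{M}$ then gives integrability. Fubini therefore lets me integrate the inner identity against $d\mu(x)$:
\begin{equation}
\int_{\mc{M}}\int_{\mc{M}} K_h\,\alpha\beta\,d\mu(y)\,d\mu(x)
=
h^2\frac{m_2}{d}\langle\alpha,\beta\rangle_{L_2(\mu)} + \int_{\mc{M}} r_h\,d\mu .
\end{equation}
The remainder term has modulus at most $C\mu(\mc{M})h^4$, since $\mu(\mc{M})<\infty$. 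The integrability of $r_h$ follows because it is the difference of two integrable functions. The first is the inner integral, which is measurable in $x$ by Fubini. The second, $x\mapsto\langle\alpha_x,\beta_x\rangle_g$, is smooth.

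Next, I multiply by $d/(m_2h^2)$, which is legitimate because $m_2>0$, as noted in the paper. This gives
\begin{equation}
\frac{d}{m_2h^2}\int\!\!\int K_h\,\alpha\beta\,d\mu\,d\mu
=
\langle\alpha,\beta\rangle_{L_2(\mu)} + O(h^2).
\end{equation}
Letting $h\to0$ yields the claimed limit, with the explicit rate $O(h^2)$ as a by-product.

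The only point needing care is the uniformity of the $O(h^4)$ remainder in $x$, which is what allows the error to be integrated. This is already supplied by the localization lemma, via compactness and the smooth dependence of normal coordinates on the base point. So I do not expect a genuine obstacle, only bookkeeping of measurability and Fubini.
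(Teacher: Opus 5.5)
Your proposal is correct and follows the paper's own proof. The paper likewise integrates the localization lemma over $x$, uses that the $O(h^4)$ remainder is uniform in $x$ together with $\mu(\mc{M})<\infty$, divides by $m_2h^2/d$, and lets $h\to 0$. Your Fubini and measurability checks, and your explicit appeal to $m_2>0$, are bookkeeping that the paper leaves implicit.
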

\begin{proof}
Integrating the short-range kernel localization lemma over $x$ gives
\begin{equation}
\int_{\mc{M}}\int_{\mc{M}}
K_h(x,y)\alpha(x,y)\beta(x,y)\,d\mu(y)\,d\mu(x)
=
h^2\frac{m_2}{d}\langle \alpha,\beta\rangle_{L_2(\mu)}+O(h^4),
\end{equation}
because the remainder is uniform in $x$ and $\mu(\mc{M})<\infty$. Dividing by $m_2h^2/d$ and letting $h\to 0$ proves the claim.
\end{proof}

In applications, the sample is usually not distributed according to the Riemannian volume. Instead, we observe a probability measure $\nu$ with smooth positive density $\pi$ relative to $\mu$:
\begin{equation}
d\nu=\pi\,d\mu.
\end{equation}
To remove this sampling bias, define the kernel density
\begin{equation}
q_h(x)\coloneq \int_{\mc{M}} K_h(x,z)\,d\nu(z).
\end{equation}
\begin{lemma}[Kernel-mass expansion]
\label{lem:kernel-mass-expansion}
Let $(\mc{M},g)$ be a smooth compact $d$-dimensional Riemannian manifold without boundary, let $\kappa\colon [0,\infty)\to [0,\infty)$ be bounded and supported in $[0,1)$, let $0<h<\mathrm{inj}(\mc{M})$, and let $d\nu=\pi\,d\mu$ with $\pi\in C^2(\mc{M})$. Then
\begin{equation}
\label{eq:kernel-mass-expansion}
q_h(x)=m_0\pi(x)+O(h^2)
\end{equation}
as $h\to 0$, uniformly in $x\in \mc{M}$.
\end{lemma}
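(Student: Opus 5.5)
The plan is to repeat the normal-coordinate computation from the short-range kernel localization lemma, with the density $\pi$ taking the place of the product $\alpha\beta$. Fix $x\in\mc{M}$. Since $\kappa$ is supported in $[0,1)$ and $h<\mathrm{inj}(\mc{M})$, every $z$ with $K_h(x,z)\neq 0$ can be written uniquely as $z=\exp_x(\xi)$ with $\xi\in B_h(0)\subset T_x\mc{M}$. In these coordinates
\begin{equation}
q_h(x)=\int_{B_h(0)} h^{-d}\kappa(|\xi|/h)\,\pi(\exp_x(\xi))\,J_x(\xi)\,d\xi .
\end{equation}

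Next I expand the integrand. Because $\pi\in C^2$ and $\mc{M}$ is compact, Taylor's theorem with the mean-value remainder gives $\pi(\exp_x(\xi))=\pi(x)+L_x(\xi)+S_x(\xi)$. Here $L_x(\xi)=d\pi_x(\xi)$ is linear, and $|S_x(\xi)|\le C|\xi|^2$ uniformly in $x$, since second derivatives of $\pi\circ\exp_x$ on $B_h(0)$ are uniformly bounded for $h$ below the injectivity radius. I use the same Jacobian expansion as before, $J_x(\xi)=1+O(|\xi|^2)$, uniformly in $x$. Multiplying the two expansions gives
\begin{equation}
\pi(\exp_x(\xi))J_x(\xi)=\pi(x)+L_x(\xi)+R_x(\xi),\qquad |R_x(\xi)|\le C|\xi|^2 .
\end{equation}

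Then I integrate term by term. The constant term gives
\begin{equation}
\pi(x)\,h^{-d}\int_0^h\kappa(t/h)t^{d-1}\,dt\,\omega_{d-1}=m_0\pi(x),
\end{equation}
after substituting $t=h\rho$. The linear term is odd in $\xi$ while the kernel is radial, so its integral vanishes. The remainder is bounded by
\begin{equation}
C\,h^{-d}\int_{B_h(0)}\kappa(|\xi|/h)|\xi|^2\,d\xi=C\,h^2 m_2,
\end{equation}
which is $O(h^2)$ uniformly in $x$, because $\kappa$ is bounded and $C$ does not depend on $x$.

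The only delicate point is uniformity in $x$. Since only $\pi\in C^2$ is assumed, I cannot push the expansion beyond second order, but it is not needed: the odd linear term cancels, and the quadratic remainder already has size $O(h^2)$. The constants come from the uniform bounds on $d\pi$, on the Hessian of $\pi\circ\exp_x$, and on the Jacobian over the compact set $\{(x,\xi):|\xi|\le h_0\}$ for some fixed $h_0<\mathrm{inj}(\mc{M})$. These bounds are finite by compactness and the smoothness of $\exp$.
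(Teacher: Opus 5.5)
Your proposal is correct and follows the paper's proof step for step. You rewrite $q_h(x)$ in geodesic normal coordinates on $B_h(0)$, expand $\pi(\exp_x(\xi))J_x(\xi)=\pi(x)+L_x(\xi)+O(|\xi|^2)$ uniformly in $x$, cancel the linear term by radial symmetry, and evaluate the constant and remainder contributions as $m_0\pi(x)$ and $Ch^2m_2=O(h^2)$, which only spells out what the paper leaves implicit.
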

\begin{proof}
Fix $x\in \mc{M}$ and use geodesic normal coordinates $z=\exp_x(\xi)$ on the support of $K_h(x,\cdot)$. Then
\begin{equation}
\label{eq:q-expansion-normal-coordinates}
q_h(x)
=
\int_{B_h(0)}
h^{-d}\kappa(|\xi|/h)\pi(\exp_x(\xi))J_x(\xi)\,d\xi,
\end{equation}
where $J_x(\xi)$ is the Jacobian of the Riemannian volume form. Since $\pi$ is $C^2$ and $J_x(\xi)=1+O(|\xi|^2)$ in normal coordinates, Taylor's theorem gives
\begin{equation}
\pi(\exp_x(\xi))J_x(\xi)
=
\pi(x)+L_x(\xi)+R_x(\xi),
\qquad
|R_x(\xi)|\le C|\xi|^2,
\end{equation}
where $L_x$ is linear in $\xi$ and the constant $C$ is uniform in $x$ by compactness. The kernel is radial and the ball $B_h(0)$ is symmetric, so the odd term integrates to $0$. Therefore
\begin{align}
q_h(x)
&=
\pi(x)\int_{B_h(0)} h^{-d}\kappa(|\xi|/h)\,d\xi
+ O\!\left(\int_{B_h(0)} h^{-d}\kappa(|\xi|/h)|\xi|^2\,d\xi\right) \notag \\
&=
\pi(x)m_0 + O(h^2),
\end{align}
where the first integral equals $m_0$ by the change of variables $\xi=h\rho\theta$, and the second is $O(h^2)$ for the same reason.
\end{proof}

Since $\pi$ is smooth and strictly positive on the compact manifold, it is bounded away from $0$. Hence \eqref{eq:kernel-mass-expansion} implies
\begin{equation}
\label{eq:density-ratio-expansion}
\frac{m_0\pi(x)}{q_h(x)}=1+O(h^2)
\end{equation}
uniformly in $x$.
Thus dividing by $q_h(x)q_h(y)$ cancels the leading effect of non-uniform sampling.

\begin{corollary}[Density-corrected continuum pairwise formula]
Let $(\mc{M},g)$ be a smooth compact $d$-dimensional Riemannian manifold without boundary, let $\alpha,\beta\in \Omega^1(\mc{M})$ be smooth, let $\kappa\colon [0,\infty)\to [0,\infty)$ be bounded and supported in $[0,1)$, and let $d\nu=\pi\,d\mu$ with $\pi>0$ smooth. Then
\begin{equation}
\label{eq:density-corrected-pairwise-formula}
\langle \alpha,\beta\rangle_{L_2(\mu)}
=
\lim_{h\to 0}
\frac{dm_0^2}{m_2h^2}
\int_{\mc{M}}\int_{\mc{M}}
\frac{K_h(x,y)}{q_h(x)q_h(y)}
\alpha(x,y)\beta(x,y)\,d\nu(y)\,d\nu(x).
\end{equation}
\end{corollary}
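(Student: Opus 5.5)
We reduce the statement to the uniform-sampling corollary by rewriting the $\nu$-integrals as $\mu$-integrals with density factors. Since $d\nu = \pi\,d\mu$, the double integral in \eqref{eq:density-corrected-pairwise-formula} equals
\begin{equation*}
\int_{\mc{M}}\int_{\mc{M}} K_h(x,y)\,\alpha(x,y)\beta(x,y)\,\frac{\pi(x)\pi(y)}{q_h(x)q_h(y)}\,d\mu(y)\,d\mu(x).
\end{equation*}
By \eqref{eq:density-ratio-expansion}, applied at both $x$ and $y$, the product of the density ratios satisfies
\begin{equation*}
m_0^2\,\frac{\pi(x)\pi(y)}{q_h(x)q_h(y)} = 1 + r_h(x,y),
\qquad
\sup_{x,y}|r_h(x,y)| = O(h^2).
\end{equation*}
This uses that $\pi$ is bounded away from $0$ on the compact manifold.

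Multiplying by $m_0^2$, the integral splits into a main term and an error term. The main term is
\begin{equation*}
\int\int K_h\,\alpha(x,y)\beta(x,y)\,d\mu\,d\mu ,
\end{equation*}
which by integrating the short-range kernel localization lemma (as in the uniform corollary) equals $h^2\frac{m_2}{d}\langle\alpha,\beta\rangle_{L_2(\mu)} + O(h^4)$. The error term is $\int\int K_h\,\alpha\beta\, r_h\,d\mu\,d\mu$. We bound it crudely. On the support of $K_h$ we have $|\alpha(x,y)|, |\beta(x,y)| \le C\, d_{\mc{M}}(x,y) \le Ch$, by boundedness of the smooth forms on the compact manifold. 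Also $\int K_h(x,y)\,d\mu(y) = m_0 + O(h^2)$ uniformly, by \cref{lem:kernel-mass-expansion} with $\pi\equiv 1$. Hence the error is at most $O(h^2)\cdot Ch^2\cdot \mu(\mc{M})\,(m_0+O(h^2)) = O(h^4)$.

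Multiplying the whole expression by $\frac{d}{m_2h^2}$ then gives
\begin{equation*}
\langle\alpha,\beta\rangle_{L_2(\mu)} + O(h^2),
\end{equation*}
and letting $h\to 0$ proves the claim. The argument also yields the quantitative $O(h^2)$ bias rate, which is needed for the main theorem.

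The only delicate point is uniformity. We need the remainder $r_h$ to be uniform in both variables simultaneously, which follows from the uniform $O(h^2)$ in \cref{lem:kernel-mass-expansion} together with $\inf\pi>0$. This is what makes the two-factor expansion $(1+O(h^2))(1+O(h^2))$ legitimate. Beyond that, the proof is a direct combination of already established lemmas, so I do not expect a genuine obstacle.
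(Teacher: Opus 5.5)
Your proposal is correct and follows the paper's own proof: you rewrite the $\nu$-integrals as $\mu$-integrals carrying the factors $m_0\pi/q_h=1+O(h^2)$ (uniformly, via the kernel-mass lemma and $\inf\pi>0$), and then invoke the integrated short-range localization lemma. Your explicit bound on the remainder $\int\int K_h\,\alpha\beta\,r_h\,d\mu\,d\mu$, using $|\alpha\beta|\le Ch^2$ on the kernel support and bounded kernel mass, is the step the paper compresses into ``substituting''; it is genuinely needed because $\alpha\beta$ has no fixed sign, so your version is slightly more complete.
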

\begin{proof}
Using $d\nu=\pi\,d\mu$, we can rewrite the integral as
\begin{equation}
\label{eq:density-corrected-rewrite}
\frac{d}{m_2h^2}
\int_{\mc{M}}\int_{\mc{M}}
K_h(x,y)
\frac{m_0\pi(x)}{q_h(x)}
\frac{m_0\pi(y)}{q_h(y)}
\alpha(x,y)\beta(x,y)\,d\mu(y)\,d\mu(x).
\end{equation}
By \eqref{eq:density-ratio-expansion}, each factor $m_0\pi/q_h$ equals $1+O(h^2)$ uniformly, so the product of the two factors is also $1+O(h^2)$ uniformly. The short-range kernel localization lemma implies that
\begin{equation}
\label{eq:pairwise-integral-size}
\int_{\mc{M}}\int_{\mc{M}} K_h(x,y)\alpha(x,y)\beta(x,y)\,d\mu(y)\,d\mu(x)
=
h^2\frac{m_2}{d}\langle \alpha,\beta\rangle_{L_2(\mu)}+O(h^4).
\end{equation}
Substituting \eqref{eq:pairwise-integral-size} into \eqref{eq:density-corrected-rewrite} therefore yields
\begin{equation}
\frac{dm_0^2}{m_2h^2}
\int_{\mc{M}}\int_{\mc{M}}
\frac{K_h(x,y)}{q_h(x)q_h(y)}
\alpha(x,y)\beta(x,y)\,d\nu(y)\,d\nu(x)
=
\langle \alpha,\beta\rangle_{L_2(\mu)}+O(h^2),
\end{equation}
and taking $h\to 0$ proves the corollary.
\end{proof}

\paragraph{Step 4: Discretize the pairwise formula}
We are able to approximate \cref{eq:density-corrected-pairwise-formula} using a discrete point cloud: Let $E_1$ be a choice of orientation of the edges of $\mc{S}_\varepsilon$. For an edge $e=(x_i,x_j)\in E_1$, define
\begin{equation}
q_i\coloneq \sum_{\ell\neq i} K_h(x_i,x_\ell),
\end{equation}
and
\begin{equation}
w_e\coloneq
\begin{cases}
\dfrac{2dm_0^2}{m_2h^2}\dfrac{K_h(x_i,x_j)}{q_iq_j}, & \text{if } K_h(x_i,x_j)>0,\\[0.8ex]
0, & \text{if } K_h(x_i,x_j)=0.
\end{cases}
\end{equation}
On the support of $K_h$, the corresponding terms in the sums defining $q_i$ and $q_j$ are strictly positive, so the denominator is well-defined there.
We then define the cochain inner product by
\begin{equation}
\langle c,c'\rangle_M
\coloneq
\sum_{e\in E_1} w_e\,c(e)c'(e),
\qquad
M=\diag(w_e)_{e\in E_1}.
\end{equation}
This is the discrete analogue of the density-corrected continuum bilinear form. The factor $2$ is part of the definition because the continuum double integral counts both ordered pairs $(x_i,x_j)$ and $(x_j,x_i)$, whereas the discrete sum uses only one chosen orientation of each undirected edge. The prefactor $dm_0^2/(m_2h^2)$ is the continuum normalization from \cref{eq:density-corrected-pairwise-formula}, so the same matrix $M$ can now be used both in the algorithm and in the convergence theorem below. In practice, $h$ should be chosen on the same scale as the simplicial parameter $\varepsilon$, although we do not require $h=\varepsilon$. If one only cares about the later optimisation problem, then any positive global scalar multiple of $M$ leads to exactly the same minimisers.

We note that we do not divide each edge contribution by its length: A $1$-cochain stores the edge integral $c_\alpha(x_i,x_j)=\int_{x_i}^{x_j}\alpha$, and for a short edge of length $r$ we have $c_\alpha(x_i,x_j)=r\,\alpha_{x_i}(\theta)+O(r^2)$ with $\theta$ the edge direction. Hence the product $c_\alpha c_\beta$ already carries a factor $r^2$. Dividing by $r^2$ would therefore convert the cochain into an estimator of the \emph{directional values} of the forms, which is the more natural normalization when discretizing vector fields or pointwise covectors. Here, however, the basic object is the cochain itself, because cohomology and the harmonic-representative problem are formulated in terms of edge integrals.
The present estimator is therefore designed for discrete $1$-cochains, not for discrete vector fields.

\paragraph{Convergence guarantee}
In the previous steps, we showed that $M$ is the correct discretization of the $L_2$ inner product on $1$-forms. In the following part, we will now show that the inner product induced by $M$ will converge to the $L_2$ inner product in a precise sense, when increasing sample size and decreasing bandwidth with the correct rates:

Fix smooth $\alpha,\beta\in \Omega^1(\mc{M})$. On each oriented edge $e=(x_i,x_j)$, let
\begin{equation}
\label{eq:def-sampled-cochains}
c_\alpha(e)\coloneq \alpha(x_i,x_j),
\qquad
c_\beta(e)\coloneq \beta(x_i,x_j).
\end{equation}
The continuum comparison functional is
\begin{equation}
I_h(\alpha,\beta)
\coloneq
\frac{dm_0^2}{m_2h^2}
\int_{\mc{M}}\int_{\mc{M}}
\frac{K_h(x,y)}{q_h(x)q_h(y)}
\alpha(x,y)\beta(x,y)\,d\nu(y)\,d\nu(x)
\end{equation}
and the normalized discrete quantity from Step~4 is
\begin{equation}
\label{eq:discrete-inner-product-expanded}
\langle c_\alpha,c_\beta\rangle_M
=
\frac{2dm_0^2}{m_2h^2}
\sum_{\substack{e=(x_i,x_j)\in E_1\\ K_h(x_i,x_j)>0}}
\frac{K_h(x_i,x_j)}{q_iq_j}\,c_\alpha(e)c_\beta(e).
\end{equation}

The argument now becomes completely linear:
\begin{equation}
\label{eq:error-decomposition}
\bigl|\langle c_\alpha,c_\beta\rangle_M-\langle \alpha,\beta\rangle_{L_2(\mu)}\bigr|
\le
\bigl|\langle c_\alpha,c_\beta\rangle_M-I_h(\alpha,\beta)\bigr|
+
\bigl|I_h(\alpha,\beta)-\langle \alpha,\beta\rangle_{L_2(\mu)}\bigr|.
\end{equation}
The second term is the deterministic approximation error from replacing infinitesimal information by a kernel average. The first term is the random sampling error from replacing a double integral by a finite edge sum.

\begin{proposition}[Deterministic bias]
Let $(\mc{M},g)$ be a smooth compact $d$-dimensional Riemannian manifold without boundary, let $\alpha,\beta\in \Omega^1(\mc{M})$ be smooth, let $\kappa\colon [0,\infty)\to [0,\infty)$ be bounded and supported in $[0,1)$, and let $d\nu=\pi\,d\mu$ with $\pi\in C^2(\mc{M})$ strictly positive. Then
\begin{equation}
I_h(\alpha,\beta)
=
\langle \alpha,\beta\rangle_{L_2(\mu)}+O(h^2)
\end{equation}
as $h\to 0$.
\end{proposition}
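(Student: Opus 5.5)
The proof mirrors the argument of the density-corrected continuum pairwise formula corollary, but keeps track of the rate instead of only taking the limit. First, using $d\nu=\pi\,d\mu$ in both variables, we will rewrite $I_h(\alpha,\beta)$ exactly as in \eqref{eq:density-corrected-rewrite}:
\begin{equation}
I_h(\alpha,\beta)=\frac{d}{m_2h^2}\int_{\mc{M}}\int_{\mc{M}}K_h(x,y)\,r_h(x)\,r_h(y)\,\alpha(x,y)\beta(x,y)\,d\mu(y)\,d\mu(x),
\qquad r_h\coloneq\frac{m_0\pi}{q_h}.
\end{equation}
By the kernel-mass expansion (\cref{lem:kernel-mass-expansion}) and the positivity of $\pi$ on the compact manifold, \eqref{eq:density-ratio-expansion} gives $r_h=1+O(h^2)$ uniformly. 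Hence $r_h(x)r_h(y)=1+\rho_h(x,y)$ with $\sup_{x,y}|\rho_h(x,y)|\le Ch^2$.

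Next we split $I_h=I_h^{(0)}+I_h^{(1)}$. The first piece $I_h^{(0)}$ replaces $r_h(x)r_h(y)$ by $1$. For it, the integrated short-range kernel localization lemma, \eqref{eq:pairwise-integral-size}, gives
\begin{equation}
I_h^{(0)}=\frac{d}{m_2h^2}\Bigl(h^2\tfrac{m_2}{d}\langle\alpha,\beta\rangle_{L_2(\mu)}+O(h^4)\Bigr)=\langle\alpha,\beta\rangle_{L_2(\mu)}+O(h^2).
\end{equation}

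The only point needing care is the remainder $I_h^{(1)}$, which carries $\rho_h$. It is \emph{not} enough to say that the main integral is $O(h^2)$ and multiply by $1+O(h^2)$ formally, because $\rho_h$ sits inside the integral. Instead we bound the integrand in absolute value. On the support of $K_h$ we have $d_{\mc{M}}(x,y)<h$, so $|\alpha(x,y)|\le\|\alpha\|_\infty d_{\mc{M}}(x,y)\le\|\alpha\|_\infty h$, and likewise for $\beta$. Moreover, $\int K_h(x,y)\,d\mu(y)$ is bounded uniformly in $x$: in normal coordinates it equals $\int h^{-d}\kappa(|\xi|/h)J_x(\xi)\,d\xi\le C m_0$, using boundedness of $J_x$ on $B_h(0)$ and boundedness of $\kappa$. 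Therefore
\begin{equation}
|I_h^{(1)}|\le\frac{d}{m_2h^2}\cdot Ch^2\cdot\|\alpha\|_\infty\|\beta\|_\infty h^2\cdot C'm_0\,\mu(\mc{M})=O(h^2).
\end{equation}
Combining the two pieces yields $I_h(\alpha,\beta)=\langle\alpha,\beta\rangle_{L_2(\mu)}+O(h^2)$.

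The main (mild) obstacle is ensuring all constants are uniform in $x$: the Taylor constants for $\pi\circ\exp_x$ and $J_x$, and the bound on $J_x$. These follow from compactness of $\mc{M}$ and smooth dependence of $\exp_x$ on $x$ for $h<\mathrm{inj}(\mc{M})$. Only $\pi\in C^2$ is needed, as in \cref{lem:kernel-mass-expansion}.
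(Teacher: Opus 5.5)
Your proposal is correct and follows the paper's route. The paper's proof likewise combines the rewrite \eqref{eq:density-corrected-rewrite}, the uniform ratio expansion \eqref{eq:density-ratio-expansion}, and the integrated localization estimate \eqref{eq:pairwise-integral-size}. Your explicit absolute-value bound on the remainder $I_h^{(1)}$, using $|\alpha(x,y)\beta(x,y)|\le Ch^2$ on the kernel support and the uniform bound on $\int K_h(x,y)\,d\mu(y)$, makes rigorous the substitution step that the paper only asserts, since \eqref{eq:pairwise-integral-size} alone does not control a sign-changing integrand multiplied by a non-constant $O(h^2)$ factor.
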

\begin{proof}
By the proof of the density-corrected continuum pairwise formula, see in particular \eqref{eq:density-corrected-rewrite}, \eqref{eq:density-ratio-expansion}, and \eqref{eq:pairwise-integral-size}, we have
\begin{equation}
I_h(\alpha,\beta)
=
\langle \alpha,\beta\rangle_{L_2(\mu)}+O(h^2).
\end{equation}
The geometric reason for the $h^2$ bias is the same as in the localization lemma: after multiplying the Taylor expansions, the cubic term is odd and integrates to $0$ against the radial kernel, so the first surviving remainder is quartic.
\end{proof}

The stochastic term looks more complicated, but its meaning is simple. For fixed $h$, only pairs with distance $O(h)$ contribute. Each point therefore interacts with about $N h^d$ neighbours, which is why the natural fluctuation size is the square-root law
\begin{equation}
\sqrt{\frac{\log N}{N h^d}}.
\end{equation}
The logarithm appears because we need uniform control of the density estimates entering the denominator.

\begin{proposition}[Sampling error]
Assume that
\begin{enumerate}
\item $(\mc{M},g)$ is a smooth compact $d$-dimensional Riemannian manifold without boundary;
\item the sample $X=\{x_1,\dots,x_N\}$ is drawn i.i.d. from a measure $\nu=\pi\,\mu$, where $\pi\in C^2(\mc{M})$ is strictly positive;
\item the kernel profile $\kappa$ is bounded, supported in $[0,1)$, and satisfies $m_2>0$;
\item $h=h_N\to 0$, $N h_N^d/\log N\to \infty$, and $h_N<\mathrm{inj}(\mc{M})$ for all sufficiently large $N$;
\item the $1$-skeleton of the chosen simplicial complex on $X$ contains every unordered pair $\{x_i,x_j\}$ for which $K_{h_N}(x_i,x_j)\neq 0$.
\end{enumerate}
Let $M=M_{N,h_N}$ be the diagonal matrix defined in Step~4 from this sample and bandwidth $h_N$.
Then, for every fixed smooth $\alpha,\beta\in \Omega^1(\mc{M})$, the sampled cochains from \eqref{eq:def-sampled-cochains} satisfy
\begin{equation}
\langle c_\alpha,c_\beta\rangle_M-I_{h_N}(\alpha,\beta)
=
O_{\mathbb P}\!\left(\sqrt{\frac{\log N}{N h_N^d}}\right).
\end{equation}
\end{proposition}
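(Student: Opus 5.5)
We split the sampling error into two parts: replacing the empirical kernel masses $q_i$ by their population counterparts, and then controlling the fluctuation of a degree-two U-statistic. First rewrite the discrete quantity as a sum over ordered pairs. Since $c_\alpha(e)c_\beta(e)$ is invariant under reversing orientation and $K_h$ is symmetric, assumption (5) gives
\begin{equation*}
\langle c_\alpha,c_\beta\rangle_M=\frac{dm_0^2}{m_2h^2}\sum_{i\neq j}\frac{K_h(x_i,x_j)}{q_iq_j}\alpha(x_i,x_j)\beta(x_i,x_j).
\end{equation*}
Set $\hat q_i\coloneq q_i/(N-1)$, which is an estimator of $q_h(x_i)$. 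The prefactors then match $I_h$ exactly:
\begin{equation*}
\langle c_\alpha,c_\beta\rangle_M=\frac{dm_0^2}{m_2h^2}\cdot\frac{1}{(N-1)^2}\sum_{i\neq j}\frac{K_h(x_i,x_j)}{\hat q_i\hat q_j}\alpha\beta(x_i,x_j).
\end{equation*}
This is an approximately normalized average over pairs, up to the factor $N/(N-1)$, which contributes only $O(1/N)$.

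\textbf{Step A (uniform density concentration).} We show that $\max_i|\hat q_i-q_h(x_i)|=O_{\mathbb P}(\sqrt{\log N/(Nh^d)})$. Condition on $x_i$. Then $\hat q_i$ is an average of $N-1$ i.i.d.\ terms bounded by $\|\kappa\|_\infty h^{-d}$, with variance $O(h^{-d})$, since $\int K_h^2\,d\nu=O(h^{-d})$. Bernstein's inequality at deviation $t=C\sqrt{\log N/(Nh^d)}$ gives a tail of $N^{-2}$. Here the condition $Nh^d/\log N\to\infty$ guarantees that the variance term dominates the range term. A union bound over $i$ finishes the step.

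By Lemma~\ref{lem:kernel-mass-expansion}, $q_h\ge m_0\min\pi/2>0$ for small $h$. Hence with high probability all $\hat q_i$ are bounded below, and
\begin{equation*}
\frac{1}{\hat q_i\hat q_j}=\frac{1}{q_h(x_i)q_h(x_j)}\bigl(1+O_{\mathbb P}(\delta_N)\bigr)
\end{equation*}
uniformly, with $\delta_N=\sqrt{\log N/(Nh^d)}$.

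\textbf{Step B (replacing empirical by population denominators).} The difference between using $\hat q$ and $q_h$ is bounded by $O_{\mathbb P}(\delta_N)$ times
\begin{equation*}
\frac{1}{h^2N^2}\sum_{i\neq j}K_h(x_i,x_j)\,|\alpha(x_i,x_j)\beta(x_i,x_j)|.
\end{equation*}
Since $|\alpha(x,y)|\le\|\alpha\|_\infty d_{\mc M}(x,y)\le Ch$ on the support, this term is at most $C\cdot N^{-2}\sum_{i\neq j}K_h(x_i,x_j)$. Its expectation is $O(1)$, so by Markov's inequality it is $O_{\mathbb P}(1)$. Step B therefore contributes $O_{\mathbb P}(\delta_N)$.

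\textbf{Step C (U-statistic fluctuation).} Define the symmetric kernel
\begin{equation*}
\Phi(x,y)=\frac{dm_0^2}{m_2h^2}\frac{K_h(x,y)}{q_h(x)q_h(y)}\alpha(x,y)\beta(x,y).
\end{equation*}
Then $U_N=\frac{1}{N(N-1)}\sum_{i\neq j}\Phi(x_i,x_j)$ has expectation exactly $I_h(\alpha,\beta)$. We apply the Hoeffding decomposition. Because $\alpha\beta=O(h^2)$ cancels the $h^{-2}$ prefactor, we have $|\Phi|\le Ch^{-d}$ and $\mathbb E\Phi^2=O(h^{-d})$. The projection $\Phi_1(x)=\int\Phi(x,y)\,d\nu(y)$ is $O(1)$ by the localization lemma. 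This gives
\begin{equation*}
\operatorname{Var}U_N\lesssim\frac{\operatorname{Var}\Phi_1}{N}+\frac{\mathbb E\Phi^2}{N^2}=O\!\left(\frac1N+\frac{1}{N^2h^d}\right),
\end{equation*}
and Chebyshev's inequality yields $U_N-I_h=O_{\mathbb P}(N^{-1/2}+(Nh^{d/2})^{-1})$. Both terms are $o(\delta_N)$, because $Nh^d\to\infty$ and $h^d\le 1$.

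Combining Steps A–C gives the claimed rate. The main obstacle is Step A together with the uniformity needed to pass from the random denominators $q_iq_j$, which depend on all sample points and break the U-statistic structure, to the deterministic $q_h$. This is where the logarithmic factor enters. A possible subtlety is the inclusion of the summand $\ell=j$ inside $q_i$. Its effect is $O(h^{-d}/N)=O(\delta_N^2)$, so it is absorbed into the same bound.
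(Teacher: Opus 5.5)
Your proposal is correct and follows the paper's proof essentially step by step. The paper also uses uniform Bernstein concentration with a union bound for the kernel masses, replaces the random denominators $q_iq_j$ by $q_h(x_i)q_h(x_j)$ at a cost of $O_{\mathbb P}(\delta_N)$ times a Markov-bounded $O_{\mathbb P}(1)$ edge sum, and applies a Hoeffding-decomposition variance bound to the resulting U-statistic. The remaining differences are cosmetic: you normalize by $N-1$ instead of $N$, you sum over ordered pairs, and you add a remark on the $\ell=j$ summand that is already covered by the uniform bound in your Step A.
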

\begin{proof}
For convenience, write
\begin{equation}
r_i\coloneq
\begin{cases}
\dfrac{N}{q_i}, & \text{if } q_i>0,\\[0.8ex]
0, & \text{if } q_i=0,
\end{cases}
\qquad
s_i\coloneq \frac{1}{q_h(x_i)},
\qquad
\Delta_{N,h}\coloneq \sup_{1\le i\le N}|r_i-s_i|.
\end{equation}
The proof has two steps.

\paragraph{Step 1: Replace the empirical reciprocals}
Fix $i$. Conditional on $x_i$, the random variables
\begin{equation}
Y_{i\ell}\coloneq K_h(x_i,x_\ell),
\qquad
\ell\neq i,
\end{equation}
are i.i.d.\ by assumption \textup{(2)}, bounded by $\|\kappa\|_\infty h^{-d}$ by assumption \textup{(3)}, and satisfy
\begin{equation}
\mathbb E[Y_{i\ell}\mid x_i]=q_h(x_i),
\qquad
\mathbb E[Y_{i\ell}^2\mid x_i]\le \|\kappa\|_\infty h^{-d}q_h(x_i)\le Ch^{-d}.
\end{equation}
Hence Bernstein's inequality yields constants $c,C>0$, independent of $i$, such that for every $0<t\le 1$,
\begin{equation}
\label{eq:conditional-bernstein}
\mathbb P\!\left(
\left|
\frac{1}{N-1}\sum_{\ell\neq i}Y_{i\ell}-q_h(x_i)
\right|>t \,\middle|\, x_i
\right)
\le
2\exp\!\left(-cNh^dt^2\right).
\end{equation}
Taking expectations and applying a union bound over $i$ gives
\begin{equation}
\label{eq:q-over-n-minus-one}
\sup_{1\le i\le N}
\left|
\frac{q_i}{N-1}-q_h(x_i)
\right|
=
O_{\mathbb P}\!\left(\sqrt{\frac{\log N}{N h^d}}\right).
\end{equation}
Since $q_i/N=\frac{N-1}{N}\cdot q_i/(N-1)$ and $q_h$ is uniformly bounded, the difference between $q_i/N$ and $q_i/(N-1)$ is $O_{\mathbb P}(1/N)$ uniformly in $i$, hence
\begin{equation}
\label{eq:q-over-n}
\sup_{1\le i\le N}
\left|
\frac{q_i}{N}-q_h(x_i)
\right|
=
O_{\mathbb P}\!\left(\sqrt{\frac{\log N}{N h^d}}\right).
\end{equation}

By \eqref{eq:kernel-mass-expansion}, whose proof uses the geometric hypotheses in assumption \textup{(1)}, and by the lower bound on $\pi$ from assumption \textup{(2)}, there exists $c_0>0$ such that $q_h(x)\ge c_0$ for all $x$ and all sufficiently small $h$. Since assumption \textup{(4)} gives $\sqrt{\log N/(N h^d)}\to 0$, \eqref{eq:q-over-n} implies that the event
\begin{equation}
\sup_i\left|\frac{q_i}{N}-q_h(x_i)\right|\le \frac{c_0}{2},
\end{equation}
has probability tending to $1$. On this event we also have $q_i/N\ge c_0/2$ for every $i$. Therefore,
\begin{equation}
\left|r_i-s_i\right|
=
\left|
\frac{1}{q_i/N}-\frac{1}{q_h(x_i)}
\right|
\le
\frac{2}{c_0^2}
\left|
\frac{q_i}{N}-q_h(x_i)
\right|,
\end{equation}
and \eqref{eq:q-over-n} implies
\begin{equation}
\label{eq:reciprocal-rate}
\Delta_{N,h}
=
O_{\mathbb P}\!\left(\sqrt{\frac{\log N}{N h^d}}\right).
\end{equation}

Define
\begin{equation}
\label{eq:def-I-tilde}
\widetilde I_{N,h}(c_\alpha,c_\beta)
\coloneq
\frac{2dm_0^2}{m_2h^2}
\sum_{e=(x_i,x_j)\in E_1}
\frac{K_h(x_i,x_j)}{N^2q_h(x_i)q_h(x_j)}c_\alpha(e)c_\beta(e).
\end{equation}
This is well-defined because assumption \textup{(3)} gives $m_2>0$.
Because $r_i$ and $s_i$ are uniformly $O_{\mathbb P}(1)$, the product difference satisfies
\begin{equation}
\sup_{i,j}|r_ir_j-s_is_j|
=
O_{\mathbb P}(\Delta_{N,h}).
\end{equation}
Substituting this bound into \eqref{eq:discrete-inner-product-expanded} and \eqref{eq:def-I-tilde} yields
\begin{align}
\label{eq:difference-I-Itilde}
\left|\langle c_\alpha,c_\beta\rangle_M-\widetilde I_{N,h}(c_\alpha,c_\beta)\right|
&\le
C\Delta_{N,h}\,
\frac{1}{N^2h^2}
\sum_{e=(x_i,x_j)\in E_1}
K_h(x_i,x_j)\,|c_\alpha(e)c_\beta(e)|.
\end{align}
Because assumption \textup{(3)} says that $\kappa$ is supported in $[0,1)$ and assumption \textup{(4)} gives $h<\mathrm{inj}(\mc{M})$ for all sufficiently large $N$, every contributing pair satisfies $d_{\mc{M}}(x_i,x_j)<h$ and the geodesic integrals defining $c_\alpha(e)$ and $c_\beta(e)$ are well-defined. Since $\mc{M}$ is compact by assumption \textup{(1)}, smoothness of the fixed forms $\alpha,\beta$ gives the uniform bound
\begin{equation}
\label{eq:cochain-size}
|c_\alpha(e)|+|c_\beta(e)|\le Ch.
\end{equation}
Therefore
\begin{equation}
\frac{1}{N^2h^2}
\sum_{e=(x_i,x_j)\in E_1} K_h(x_i,x_j)\,|c_\alpha(e)c_\beta(e)|
\le
\frac{C}{N^2}
\sum_{1\le i\neq j\le N} K_h(x_i,x_j).
\end{equation}
The expectation of this quantity is
\begin{equation}
\frac{1}{N^2}\sum_{i\neq j}\mathbb E[K_h(X_i,X_j)]
=
\frac{N-1}{N}\,\mathbb E[q_h(X_1)]
\le C,
\end{equation}
so Markov's inequality gives
\begin{equation}
\frac{1}{N^2h^2}
\sum_{e\in E_1}K_h(x_i,x_j)\,|c_\alpha(e)c_\beta(e)|
=
O_{\mathbb P}(1).
\end{equation}
Together with \eqref{eq:reciprocal-rate}, this proves
\begin{equation}
\label{eq:reciprocal-replacement-error}
\langle c_\alpha,c_\beta\rangle_M-\widetilde I_{N,h}(c_\alpha,c_\beta)
=
O_{\mathbb P}\!\left(\sqrt{\frac{\log N}{N h^d}}\right).
\end{equation}

\paragraph{Step 2: Compare the pairwise average with the continuum expectation}
After the reciprocal replacement, the discrete sum becomes a pairwise average of the kernel
\begin{equation}
\label{eq:def-Phi-h}
\Phi_h(x,y)
\coloneq
\frac{dm_0^2}{m_2h^2}
\frac{K_h(x,y)}{q_h(x)q_h(y)}
\alpha(x,y)\beta(x,y).
\end{equation}
\begin{equation}
\label{eq:def-U-N-h}
U_{N,h}
\coloneq
\frac{2}{N(N-1)}
\sum_{1\le i<j\le N}\Phi_h(x_i,x_j).
\end{equation}
Because $\alpha(y,x)=-\alpha(x,y)$ and $\beta(y,x)=-\beta(x,y)$, the product $\alpha(x,y)\beta(x,y)$ is symmetric in $(x,y)$, hence so is $\Phi_h$. Assumption \textup{(5)} therefore implies
\begin{equation}
\label{eq:Itilde-vs-U}
\widetilde I_{N,h}(c_\alpha,c_\beta)
=
\frac{N-1}{N}\,U_{N,h}.
\end{equation}

On the support of $K_h$, smoothness gives $|\alpha(x,y)|+|\beta(x,y)|\le Ch$, hence
\begin{equation}
\label{eq:phi-moments}
\|\Phi_h\|_\infty\le C h^{-d},
\qquad
\mathbb E[\Phi_h(X_1,X_2)^2]\le C h^{-d}.
\end{equation}
Here the support condition in assumption \textup{(3)} and the injectivity-radius condition in assumption \textup{(4)} ensure that $\alpha(x,y)$ and $\beta(x,y)$ are evaluated only on geodesic pairs with $d_{\mc{M}}(x,y)<h$, and compactness from assumption \textup{(1)} gives the uniform constants. The second bound follows from the first because assumption \textup{(3)} also gives $K_h^2\le \|\kappa\|_\infty h^{-d}K_h$.

Set
\begin{equation}
g_h(x)\coloneq \mathbb E[\Phi_h(x,X_2)]-I_h(\alpha,\beta),
\qquad
H_h(x,y)\coloneq \Phi_h(x,y)-I_h(\alpha,\beta)-g_h(x)-g_h(y).
\end{equation}
Then $\mathbb E[g_h(X_1)]=0$ and
\begin{equation}
\mathbb E[H_h(x,X_2)]=\mathbb E[H_h(X_1,y)]=0.
\end{equation}
Because the sample points are i.i.d.\ by assumption \textup{(2)}, the Hoeffding decomposition gives
\begin{equation}
\label{eq:hoeffding}
U_{N,h}-I_h(\alpha,\beta)
=
\frac{2}{N}\sum_{i=1}^N g_h(x_i)
+
\frac{2}{N(N-1)}
\sum_{1\le i<j\le N} H_h(x_i,x_j).
\end{equation}
Moreover,
\begin{equation}
|g_h(x)|
\le
\int_{\mc{M}}|\Phi_h(x,y)|\,d\nu(y)+|I_h(\alpha,\beta)|
\le C,
\end{equation}
because $|\alpha(x,y)\beta(x,y)|\le Ch^2$ on the support of $K_h$ and $\int K_h(x,y)\,d\nu(y)=q_h(x)=O(1)$ uniformly. Hence
\begin{equation}
\label{eq:linear-part-variance}
\mathbb E\!\left[
\left(
\frac{2}{N}\sum_{i=1}^N g_h(X_i)
\right)^2
\right]
=
O\!\left(\frac{1}{N}\right).
\end{equation}

For the canonical part, mixed second moments vanish unless the two unordered pairs coincide: if $\{i,j\}\neq \{k,\ell\}$, then either the pairs are disjoint, in which case independence gives zero, or they share exactly one index, in which case conditioning on the shared variable and using the canonical property again gives zero. Therefore
\begin{equation}
\label{eq:canonical-second-moment}
\mathbb E\!\left[
\left(
\frac{2}{N(N-1)}
\sum_{1\le i<j\le N} H_h(X_i,X_j)
\right)^2
\right]
\le
\frac{C}{N^2}\,\mathbb E[\Phi_h(X_1,X_2)^2]
=
O\!\left(\frac{1}{N^2h^d}\right).
\end{equation}
By Chebyshev's inequality, \eqref{eq:linear-part-variance}, and \eqref{eq:canonical-second-moment},
\begin{equation}
\label{eq:u-stat-rate}
U_{N,h}-I_h(\alpha,\beta)
=
O_{\mathbb P}\!\left(\frac{1}{\sqrt{N}}\right)
+
O_{\mathbb P}\!\left(\frac{1}{N h^{d/2}}\right)
=
O_{\mathbb P}\!\left(\sqrt{\frac{1}{N h^d}}\right).
\end{equation}
The last equality uses $h\to 0$, so $h^d\le 1$ for all sufficiently large $N$ and therefore $N^{-1/2}\le (Nh^d)^{-1/2}$.

Since \eqref{eq:Itilde-vs-U} implies
\begin{equation}
\widetilde I_{N,h}(c_\alpha,c_\beta)-U_{N,h}
=
O_{\mathbb P}\!\left(\frac{1}{N}\right),
\end{equation}
because $U_{N,h}=I_h(\alpha,\beta)+O_{\mathbb P}((Nh^d)^{-1/2})=O_{\mathbb P}(1)$, we obtain
\begin{equation}
\label{eq:Itilde-minus-Ih}
\widetilde I_{N,h}(c_\alpha,c_\beta)-I_h(\alpha,\beta)
=
O_{\mathbb P}\!\left(\sqrt{\frac{1}{N h^d}}\right).
\end{equation}
Combining \eqref{eq:reciprocal-replacement-error} and \eqref{eq:Itilde-minus-Ih} proves the claim.
\end{proof}

\begin{theorem}[Consistency of the discrete inner product]
\label{thm:discrete-inner-product-consistency}
Assume that
\begin{enumerate}
\item $(\mc{M},g)$ is a smooth compact $d$-dimensional Riemannian manifold without boundary;
\item the sample $X=\{x_1,\dots,x_N\}$ is drawn i.i.d. from a measure $\nu=\pi\,\mu$, where $\pi\in C^2(\mc{M})$ is strictly positive;
\item the kernel profile $\kappa$ is bounded, supported in $[0,1)$, and satisfies $m_2>0$;
\item $h=h_N\to 0$, $N h_N^d/\log N\to \infty$, and $h_N<\mathrm{inj}(\mc{M})$ for all sufficiently large $N$;
\item the $1$-skeleton of the chosen simplicial complex on $X$ contains every unordered pair $\{x_i,x_j\}$ for which $K_{h_N}(x_i,x_j)\neq 0$.
\end{enumerate}
Let $M=M_{N,h_N}$ be the diagonal matrix defined in Step~4 from this sample and bandwidth $h_N$.
Then, for every fixed smooth $\alpha,\beta\in \Omega^1(\mc{M})$, the sampled cochains from \eqref{eq:def-sampled-cochains} satisfy
\begin{equation}
\label{eq:discrete-inner-product-consistency-estimate}
\langle c_\alpha,c_\beta\rangle_M
=
\langle \alpha,\beta\rangle_{L_2(\mu)}
+ O(h_N^2)
+ O_{\mathbb P}\!\left(\sqrt{\frac{\log N}{N h_N^d}}\right).
\end{equation}
In particular,
\begin{equation}
\langle c_\alpha,c_\beta\rangle_M
\xrightarrow[N\to\infty]{\mathbb{P}}
\langle \alpha,\beta\rangle_{L_2(\mu)}.
\end{equation}
\end{theorem}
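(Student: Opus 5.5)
The estimate follows by combining the two propositions just proved through the triangle inequality \eqref{eq:error-decomposition}. The right-hand side of \eqref{eq:error-decomposition} has a deterministic term and a stochastic term, and I bound them separately.

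For the deterministic term I invoke the deterministic-bias proposition. Its hypotheses are a compact boundaryless manifold, smooth forms, a bounded kernel profile supported in $[0,1)$, and a strictly positive $C^2$ density; these are exactly assumptions (1)–(3). It gives $|I_{h_N}(\alpha,\beta)-\langle\alpha,\beta\rangle_{L_2(\mu)}|=O(h_N^2)$. The only point to check is that the implied constant does not depend on $N$. It depends only on $\mc{M}$, $\alpha$, $\beta$, $\kappa$ and $\pi$, because all Taylor remainders were uniform in $x$ by compactness. So evaluating along the sequence $h=h_N$ is legitimate once $h_N<\mathrm{inj}(\mc{M})$, which holds for large $N$ by (4).

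For the stochastic term I invoke the sampling-error proposition, whose hypotheses (1)–(5) coincide verbatim with those of the theorem. It gives $\langle c_\alpha,c_\beta\rangle_M-I_{h_N}(\alpha,\beta)=O_{\mathbb P}\bigl(\sqrt{\log N/(Nh_N^d)}\bigr)$.

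Adding the two bounds yields \eqref{eq:discrete-inner-product-consistency-estimate}. For the convergence in probability, note that $h_N\to0$ makes the $O(h_N^2)$ term vanish deterministically. Assumption (4), $Nh_N^d/\log N\to\infty$, makes the rate $r_N=\sqrt{\log N/(Nh_N^d)}$ tend to $0$. A term of order $O_{\mathbb P}(r_N)$ with $r_N\to0$ is $o_{\mathbb P}(1)$: for any $\epsilon,\delta>0$ choose $C$ so that the probability of exceeding $Cr_N$ is below $\delta$ eventually, then take $N$ large enough that $Cr_N<\epsilon$. Hence $\langle c_\alpha,c_\beta\rangle_M\to\langle\alpha,\beta\rangle_{L_2(\mu)}$ in probability.

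There is no real obstacle at this stage, since the work was done in the two propositions. The one subtlety is a bookkeeping check that the bandwidth $h$ in the definition of $M$ (Step~4) is the same $h_N$ used in $I_{h_N}$, so that the two propositions compare against the same intermediate quantity. It also needs checking that $I_{h_N}$ is well-defined, which holds because $q_{h_N}\ge c_0>0$ for small $h_N$ by \cref{lem:kernel-mass-expansion}.
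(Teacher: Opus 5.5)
Your proposal is correct and matches the paper's proof: you split the error with the triangle inequality \eqref{eq:error-decomposition}, bound the bias term by the deterministic-bias proposition and the sampling term by the sampling-error proposition, and observe that both rates vanish under assumption (4). Your extra bookkeeping is sound and only makes explicit what the paper leaves implicit, namely that the constants are independent of $N$, that $O_{\mathbb P}(r_N)$ with $r_N\to 0$ is $o_{\mathbb P}(1)$, and that $q_{h_N}$ is bounded below.
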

\begin{proof}
Apply the decomposition \eqref{eq:error-decomposition}. The deterministic-bias proposition gives
\begin{equation}
I_{h_N}(\alpha,\beta)-\langle \alpha,\beta\rangle_{L_2(\mu)}=O(h_N^2),
\end{equation}
while the sampling-error proposition gives
\begin{equation}
\langle c_\alpha,c_\beta\rangle_M-I_{h_N}(\alpha,\beta)
=
O_{\mathbb P}\!\left(\sqrt{\frac{\log N}{N h_N^d}}\right).
\end{equation}
Adding the two bounds proves \eqref{eq:discrete-inner-product-consistency-estimate}. The hypotheses $h_N\to 0$ and $N h_N^d/\log N\to \infty$ force both error terms to vanish, hence
\begin{equation}
\langle c_\alpha,c_\beta\rangle_M\xrightarrow[N\to\infty]{\mathbb{P}}\langle \alpha,\beta\rangle_{L_2(\mu)}.
\end{equation}
\end{proof}

We can interpret this in the following way:
\begin{enumerate}
\item the term $h_N^2$ is the geometric bias from replacing an infinitesimal inner product by an average over a ball of radius $h_N$;
\item the term $\sqrt{\log N/(N h_N^d)}$ is the finite-sample error from estimating that local average using only finitely many nearby pairs.
\end{enumerate}

\begin{corollary}[Balanced bandwidth]
Under the assumptions of \cref{thm:discrete-inner-product-consistency}, if
\begin{equation}
h_N\asymp \left(\frac{\log N}{N}\right)^{1/(d+4)},
\end{equation}
then, for every fixed smooth $\alpha,\beta\in \Omega^1(\mc{M})$,
\begin{equation}
\langle c_\alpha,c_\beta\rangle_M
=
\langle \alpha,\beta\rangle_{L_2(\mu)}
+ O_{\mathbb P}\!\left(\left(\frac{\log N}{N}\right)^{2/(d+4)}\right).
\end{equation}
\end{corollary}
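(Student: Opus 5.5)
The plan is to substitute the prescribed bandwidth into the two-term error bound of \cref{thm:discrete-inner-product-consistency} and check that both terms have the same order. Write $h_N = \lambda_N (\log N/N)^{1/(d+4)}$ with $0<c_1\le \lambda_N\le c_2<\infty$ for all large $N$; this is what $h_N\asymp(\log N/N)^{1/(d+4)}$ means.

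\textbf{Step 1: the hypotheses of \cref{thm:discrete-inner-product-consistency} hold.}
\begin{enumerate}
\item Since $\log N/N\to0$, we get $h_N\to 0$, and hence $h_N<\mathrm{inj}(\mc{M})$ for all large $N$.
\item The remaining rate condition follows from
\begin{equation}
\frac{N h_N^d}{\log N}\asymp \left(\frac{N}{\log N}\right)^{1-\frac{d}{d+4}}=\left(\frac{N}{\log N}\right)^{\frac{4}{d+4}}\to\infty .
\end{equation}
\end{enumerate}
All other assumptions are inherited from the corollary's own hypotheses.

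\textbf{Step 2: evaluate the two error terms.}
\begin{enumerate}
\item The bias term is $O(h_N^2)=O\bigl((\log N/N)^{2/(d+4)}\bigr)$, using the upper bound $\lambda_N\le c_2$.
\item For the stochastic term, the lower bound $\lambda_N\ge c_1$ gives
\begin{equation}
\sqrt{\frac{\log N}{N h_N^d}}\le c_1^{-d/2}\left(\frac{\log N}{N}\right)^{\frac12\left(1-\frac{d}{d+4}\right)}=c_1^{-d/2}\left(\frac{\log N}{N}\right)^{\frac{2}{d+4}} .
\end{equation}
So $O_{\mathbb P}$ of the left side is also $O_{\mathbb P}\bigl((\log N/N)^{2/(d+4)}\bigr)$.
\end{enumerate}

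\textbf{Step 3: combine.} A deterministic $O(a_N)$ term plus an $O_{\mathbb P}(a_N)$ term is $O_{\mathbb P}(a_N)$. This gives the claimed estimate. The balanced choice is exactly the one that solves $h^2=(\log N/(Nh^d))^{1/2}$, i.e.\ $h^{d+4}=\log N/N$.

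There is no real obstacle. The only point needing care is that the constants implicit in the $O(\cdot)$ and $O_{\mathbb P}(\cdot)$ terms of \cref{thm:discrete-inner-product-consistency} do not depend on the particular sequence $h_N$. This is true because the proofs of the deterministic-bias and sampling-error propositions use constants depending only on $\mc{M},\pi,\kappa,\alpha,\beta$. The two-sided bounds on $\lambda_N$ then carry those constants through unchanged.
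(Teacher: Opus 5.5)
Your proposal is correct and matches the paper's proof: substitute $h_N\asymp(\log N/N)^{1/(d+4)}$ into the two error terms of \cref{thm:discrete-inner-product-consistency} and check that both $h_N^2$ and $\sqrt{\log N/(Nh_N^d)}$ are of order $(\log N/N)^{2/(d+4)}$. Your additional checks are sound refinements of the paper's one-line computation: that this bandwidth satisfies $Nh_N^d/\log N\to\infty$, and that the bias term uses the upper bound on $\lambda_N$ while the stochastic term uses the lower bound.
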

\begin{proof}
For this choice of bandwidth,
\begin{equation}
h_N^2\asymp \left(\frac{\log N}{N}\right)^{2/(d+4)}
\end{equation}
and
\begin{equation}
\sqrt{\frac{\log N}{N h_N^d}}
\asymp
\sqrt{
\frac{\log N}{N}
\left(\frac{N}{\log N}\right)^{d/(d+4)}
}
=
\left(\frac{\log N}{N}\right)^{2/(d+4)}.
\end{equation}
Substituting these two relations into \cref{thm:discrete-inner-product-consistency} gives the claim.
\end{proof}

The theorem establishes consistency of the bilinear form on cochains obtained by sampling fixed smooth forms. It does not yet imply convergence of the discrete harmonic representatives produced by the constrained minimisation problem on $C^1(\mc{S}_\varepsilon;\R)$.

In \cref{sec:KernelRemarks}, we will discuss why the choice of kernel normalisation by different powers of $r$ is not expected to improve the convergence rate.

\section{Remarks on the choice of kernel, density correction, and normalization}
\label{sec:KernelRemarks}
One might ask whether replacing
\begin{equation}
w_e=\frac{2dm_0^2}{m_2h^2}\frac{K_h(x_i,x_j)}{q_iq_j}
\end{equation}
by a more general family
\begin{equation}
w_e^{(a)}\coloneq \frac{2dm_0^{2a}}{m_2h^2}\frac{K_h(x_i,x_j)}{q_i^a q_j^a},
\qquad a\in\R,
\end{equation}
could improve the convergence.
The corresponding continuum functional is
\begin{equation}
I_h^{(a)}(\alpha,\beta)
\coloneq
\frac{d m_0^{2a}}{m_2 h^2}
\int_{\mc{M}}\int_{\mc{M}}
\frac{K_h(x,y)}{q_h(x)^a q_h(y)^a}
\alpha(x,y)\beta(x,y)\,d\nu(y)\,d\nu(x).
\end{equation}
Using $q_h(x)=m_0\pi(x)+O(h^2)$ and $d\nu=\pi\,d\mu$, we obtain
\begin{equation}
I_h^{(a)}(\alpha,\beta)
=
\int_{\mc{M}} \pi(x)^{2-2a}\langle \alpha_x,\beta_x\rangle_g\,d\mu(x)
+ O(h^2).
\end{equation}
Therefore $a=1$ is the unique exponent in this family that converges to the geometric inner product $\langle \alpha,\beta\rangle_{L_2(\mu)}$.

\end{document}